\newtheorem{theorem}{Theorem}[section]
\newtheorem{lemma}[theorem]{Lemma}
\newtheorem{proposition}[theorem]{Proposition}
\newtheorem{corollary}[theorem]{Corollary}
\theoremstyle{definition}
\theoremstyle{remark}
\newtheorem{remark}[theorem]{Remark}
\numberwithin{equation}{section}
\begin{document}

\title [Extensions of interpolation between the arithmetic-geometric]{Extensions of interpolation between the arithmetic-geometric mean inequality for matrices}

\author[M. Bakherad, R. Lashkaripour, M. Hajmohamadi]{M. Bakherad$^1$, R. Lashkaripour$^2$ and  M. Hajmohamadi$^3$}

\address{$^1$$^2$$^3$Department of Mathematics, Faculty of Mathematics, University of Sistan and Baluchestan, Zahedan, I.R.Iran.}

\email{$^1$mojtaba.bakherad@yahoo.com; bakherad@member.ams.org}
\email{$^{2}$lashkari@hamoon.usb.ac.ir}
\email{$^{2}$monire.hajmohamadi@yahoo.com}
\subjclass[2010]{Primary 47A64, Secondary  15A60.}

\keywords{Arithmetic-geometric mean, Unitarily invariant norm,  Hilbert-Schmidt norm,  Cauchy-Schwarz inequality}
\begin{abstract}
In this paper, we present some extensions of interpolation between the arithmetic-geometric means inequality.  Among other inequalities, it is
shown that if $A, B, X$ are $n\times n$ matrices, then
\begin{align*}
\|AXB^*\|^2\leq\|f_1(A^*A)Xg_1(B^*B)\|\,\|f_2(A^*A)Xg_2(B^*B)\|,
\end{align*}
where $f_1,f_2,g_1,g_2$ are non-negative continues functions such that $f_1(t)f_2(t)=t$ and $g_1(t)g_2(t)=t\,\,(t\geq0)$. We also obtain the inequality
\begin{align*}
\left|\left|\left|AB^*\right|\right|\right|^2\nonumber&\leq
\left|\left|\left|p(A^*A)^{\frac{m}{p}}+ (1-p)(B^*B)^{\frac{s}{1-p}}\right|\right|\right|\,\left|\left|\left|(1-p)(A^*A)^{\frac{n}{1-p}}+ p(B^*B)^{\frac{t}{p}}\right|\right|\right|,
\end{align*}
in which  $m,n,s,t$ are real numbers such that $m+n=s+t=1$,  $|||\cdot|||$ is an arbitrary unitarily invariant norm and $p\in[0,1]$.
\end{abstract} \maketitle
\section{Introduction and preliminaries}

\noindent Let $\mathcal{M}_n$ be the $C^*$-algebra of all
$n\times n$ complex matrices and  $\langle\,\cdot\,,\cdot\,\rangle$ be the standard scalar
product in $\mathbb{C}^n$ with the identity $I$.
The Gelfand map $f(t)\mapsto f(A)$ is an
isometrical $*$-isomorphism between the $C^*$-algebra
$C({\rm sp}(A))$ of continuous functions on the spectrum ${\rm sp}(A)$
of a Hermitian matrix $A$ and the $C^*$-algebra generated by $A$ and $I$.

A norm $|||\cdot|||$ on $\mathcal{M}_n$ is said to be unitarily invariant norm if $|||UAV|||=|||A|||$, for all unitary matrices $U$ and $V$. For $A\in\mathcal{M}_n$, let $s_1(A) \geq s_2(A) \geq \cdots  \geq s_n(A)$ denote the singular
values of $A$, i.e. the eigenvalues of the positive semidefinite matrix $|A| = (A^*A)^\frac{1}{2}$ arranged
in a decreasing order with their multiplicities counted. Note that $s_j(A)=s_j(A^*)=s_j(|A|)\,\,(1\leq j \leq n)$ and $\|A\|=s_1(A)$. The Ky Fan norm of a matrix $A$ is defined as $\|A\|_{(k)}=\sum_{j=1}^k s_j(A)\,\,(1\leq k\leq n)$.
The Fan dominance theorem asserts that $\|A\|_{
(k)} \leq\|B\|_{(k)}$
 for $k = 1, 2, \cdots, n$
if and only if $|||A||| \leq |||B|||$ for every unitarily invariant norm (see \cite[p.93]{bha3}). The Hilbert-Schmidt norm is defined
by $\|A\|_2=\left(\sum_{j=1}^ns_j^2(A)\right)^{1/2}$, where is  unitarily invariant. \\
The classical Cauchy-Schwarz inequality for $a_j\geq0$, $b_j\geq0\,\,(1\leq j\leq n)$ states that
 \begin{align*}
\left(\sum_{j=1}^n a_jb_j  \right)^2\leq\left(\sum_{j=1}^n a_j^2 \right)\left(\sum_{j=1}^n b_j^2  \right)
 \end{align*}
 with equality if and only if  $(a_1,\cdots, a_n)$  and $(b_1,\cdots, b_n)$ are proportional \cite{pec}.
  Bhatia and Davis gave a matrix Cauchy-Schwarz inequality as follows
 \begin{align}\label{a24}
||| AXB^*|||^2\leq |||A^*AX|||\,|||XB^*B|||
 \end{align}
 where $A, B, X\in\mathcal{M}_n$. (For further information about the Cauchy-Schwarz inequality, see \cite{bakh0, bakh2, bakh4} and
references therein.) Recently, Kittaneh  et al. \cite{kit1} extended inequality \eqref{a24} to the form
 \begin{align}\label{kiteq1}
||| AXB^*|||^2\leq |||(A^*A)^pX(B^*B)^{1-p}|||\,|||(A^*A)^{1-p}X(B^*B)^{p}|||,
 \end{align}
where $A, B, X\in\mathcal{M}_n$ and $p\in[0,1]$.
Audenaert \cite{ader}  proved that for all $A, B\in\mathcal{M}_n$ and all $p \in [0, 1]$, we have
 \begin{align}\label{ader}
||| AB^*|||^2\leq |||pA^*A+(1-p)B^*B|||\,|||(1-p)A^*A+pB^*B|||.
 \end{align}
 In \cite{zo}, the authors  generalized inequality \eqref{ader}  for all $A, B, X\in\mathcal{M}_n$ and all $p \in [0, 1]$ to the form
 \begin{align}\label{newader}
||| AXB^*|||^2\leq |||pA^*AX+(1-p)XB^*B|||\,|||(1-p)A^*AX+pXB^*B|||.
 \end{align}
 Inequality \eqref{newader} interpolates between the arithmetic-geometric mean inequality. In \cite{kit1}, the authors  showed a refinement of inequality \eqref{newader} for the  Hilbert-Schmidt norm as follows
{\footnotesize\begin{align}\label{kiteq2}
\left\|AXB^*\right\|_2^2&\leq
\left(\left\|p A^*A X+(1-p) XB^*B\right\|_2^2\,-r^2\left\|A^*AX-XB^*B\right\|_2^2\right)\nonumber\\&
\quad\times\left(\left\|(1-p) A^*A X+p XB^*B\right\|_2^2\,-r^2\left\|A^*AX-XB^*B\right\|_2^2\right),
\end{align}}
in which $A, B, X\in\mathcal{M}_n$, $p\in[0,1]$ and $r=\min\{p,1-p\}$.
The Young inequality for every unitarily invariant norm states that $|||A^pB^{1-p}|||\leq|||pA+(1-p)B|||,$ where $A, B$ are positive definite matrices and $p\in[0,1]$ (see \cite{ando} and also \cite{bakh}). Kosaki \cite{kos}
extended the last inequality for the Hilbert-Schmidt norm as follows
\begin{eqnarray}\label{momo2}
\|A^pXB^{1-p}\|_2\leq\|pAX+(1-p)XB\|_2,
\end{eqnarray}
where $A, B$ are positive definite matrices,  $X$ is any matrix and $p\in[0,1]$. In \cite{hirz}, the authors  considered as a refined matrix
Young inequality for the Hilbert-Schmidt norm
\begin{eqnarray}\label{momo}
\|A^pXB^{1-p}\|_2^2+r^2\|AX-XB\|_2^2\leq\|pAX+(1-p)XB\|_2^2,
\end{eqnarray}
in which $A, B$ are positive semidefinite matrices, $X\in\mathcal{M}_n$, $p\in[0,1]$ and $r=\min\{p,1-p\}$.\\
Based on the refined Young inequality \eqref{momo}, Zhao and Wu \cite{zh} proved that
\begin{align}\label{mm}
\|A^pXB^{1-p}\|_2^2+r_0\|A^{\frac{1}{2}}XB^{\frac{1}{2}}-AX\|_2^2+(1-p)^{2}\|AX-XB\|_2^2\leq\|pAX+(1-p)XB\|_2^2,
\end{align}
for $0<p\leq\frac{1}{2}$ and
\begin{align*}
\|A^pXB^{1-p}\|_2^2+r_0\|A^{\frac{1}{2}}XB^{\frac{1}{2}}-XB\|_2^2+p^{2}\|AX-XB\|_2^2\leq\|pAX+(1-p)XB\|_2^2,
\end{align*}
for $\frac{1}{2}<p<1$ such that $r=\min\{p, 1-p\}$ and $r_0=\min\{2r, 1-2r\}$.

In this paper, we obtain some operator and unitarily invariant norms inequalities. Among other results,  we obtain a refinement of inequality \eqref{kiteq2} and  we also extend inequalities \eqref{kiteq1}, \eqref{ader} and \eqref{kiteq2} for
the function $f(t)=t^p\,\,(p\in\mathbb{R})$.

\section{Main results}
\bigskip In this section, by using some ideas of \cite{kit1} we extend the Audenaert results for the operator norm.\\
\begin{theorem}\label{3}
Let $A, B, X\in\mathcal{M}_n$  and $f_1,f_2,g_1,g_2$ be non-negative continues functions such that $f_1(t)f_2(t)=t$ and $g_1(t)g_2(t)=t\,\,(t\geq0)$. Then
\begin{align}\label{eq1}
\|AXB^*\|^2\leq\|f_1(A^*A)Xg_1(B^*B)\|\,\|f_2(A^*A)Xg_2(B^*B)\|.
\end{align}
\end{theorem}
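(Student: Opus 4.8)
The plan is to reduce \eqref{eq1}, via the polar decomposition, to a ``sandwiched'' inequality for the positive parts, and then to prove that by complex (Hadamard three-lines) interpolation -- the analytic method that underlies the word ``interpolation'' in the title and the power case \eqref{kiteq1}.

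First I would write polar decompositions $A=U|A|$ and $B=V|B|$ with $|A|=(A^*A)^{1/2}$, $|B|=(B^*B)^{1/2}$ and $U,V$ unitary; I may assume $A,B$ invertible, the general case following by continuity of both sides in \eqref{eq1}. Then $AXB^*=U|A|X|B|V^*$, so unitary invariance of the operator norm gives $\|AXB^*\|=\||A|X|B|\|$, while the right-hand side is unchanged since $f_i(A^*A)=f_i(P)$ and $g_i(B^*B)=g_i(Q)$. Writing $P=A^*A$ and $Q=B^*B$ (positive definite), the claim becomes
\[
\|P^{1/2}XQ^{1/2}\|^2\le\|f_1(P)Xg_1(Q)\|\,\|f_2(P)Xg_2(Q)\|,
\]
and the hypotheses read $f_1(P)f_2(P)=P$ and $g_1(Q)g_2(Q)=Q$ with all four factors positive definite: on $\mathrm{sp}(P)\subset(0,\infty)$ one has $f_1 f_2=t>0$, forcing $f_1,f_2>0$, and similarly for $g_1,g_2$.

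Next I would introduce the analytic family on the strip $0\le\mathrm{Re}\,z\le1$,
\[
F(z)=f_1(P)^z f_2(P)^{1-z}\,X\,g_1(Q)^z g_2(Q)^{1-z},
\]
with $C^z:=e^{z\log C}$ for positive definite $C$. At the midpoint the functional-calculus identities $f_1 f_2=t$ and $g_1 g_2=t$ give $f_1(P)^{1/2}f_2(P)^{1/2}=P^{1/2}$ and $g_1(Q)^{1/2}g_2(Q)^{1/2}=Q^{1/2}$, so $F(1/2)=P^{1/2}XQ^{1/2}$. On the line $\mathrm{Re}\,z=0$ the factors $f_1(P)^{iy}$, $f_2(P)^{-iy}$, $g_1(Q)^{iy}$, $g_2(Q)^{-iy}$ are unitary and commute with the remaining positive factors (all are functions of $P$, resp. $Q$), so $F(iy)=W_1\,f_2(P)Xg_2(Q)\,W_2$ with $W_1,W_2$ unitary, whence $\|F(iy)\|=\|f_2(P)Xg_2(Q)\|$. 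The same computation on $\mathrm{Re}\,z=1$ yields $\|F(1+iy)\|=\|f_1(P)Xg_1(Q)\|$.

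Finally I would apply the Hadamard three-lines theorem to the bounded analytic scalar function $z\mapsto\langle F(z)\xi,\eta\rangle$ for fixed unit vectors $\xi,\eta$, and take the supremum over $\xi,\eta$, to obtain $\|F(1/2)\|\le\big(\sup_y\|F(iy)\|\big)^{1/2}\big(\sup_y\|F(1+iy)\|\big)^{1/2}$, which is exactly the displayed sandwiched inequality and hence \eqref{eq1}. I expect the main obstacles to be two bookkeeping points rather than any deep idea: justifying the reduction to invertible $A,B$ by an approximation argument (so that the complex powers $f_i(P)^z$, $g_i(Q)^z$ are defined), and verifying the two boundary evaluations, where one must use that within each family the unitary imaginary-power factors commute with the positive factors so that unitary invariance applies cleanly. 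As a sanity check, the choice $f_1(t)=t^p$, $f_2(t)=t^{1-p}$, $g_1(t)=t^{1-p}$, $g_2(t)=t^p$ recovers \eqref{kiteq1}.
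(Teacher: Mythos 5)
Your proof is correct, but it takes a genuinely different route from the paper's. The paper's argument is purely algebraic and fits in a few lines: it writes $\|AXB^*\|^2=\|BX^*A^*AXB^*\|=\lambda_{\max}(BX^*A^*AXB^*)$, uses the fact that $ST$ and $TS$ have the same nonzero spectrum to cycle this to $\lambda_{\max}\big(g_1(B^*B)X^*f_1(A^*A)\cdot f_2(A^*A)Xg_2(B^*B)\big)$ after inserting $f_1f_2$ and $g_1g_2$, bounds the spectral radius by the operator norm, and finishes with submultiplicativity and $\|Y\|=\|Y^*\|$. That argument needs no invertibility assumption, no approximation step, and no analyticity. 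Your three-lines argument is also sound: the polar-decomposition reduction, the evaluation $F(1/2)=P^{1/2}XQ^{1/2}$ (using that $f_1(P)$ and $f_2(P)$ commute), the unitarity of the imaginary-power factors on the two boundary lines, and the passage through the scalar functions $\langle F(z)\xi,\eta\rangle$ all check out, and the reduction to invertible $A,B$ by density and continuity of the functional calculus is legitimate for matrices. What your approach buys is generality in the interpolation parameter: the same family $F$ evaluated at $\theta\in(0,1)$ rather than $1/2$ yields $\|f_1(P)^{\theta}f_2(P)^{1-\theta}Xg_1(Q)^{\theta}g_2(Q)^{1-\theta}\|\le\|f_1(P)Xg_1(Q)\|^{\theta}\,\|f_2(P)Xg_2(Q)\|^{1-\theta}$, of which \eqref{eq1} is the midpoint case; what it costs is the extra machinery and the two bookkeeping steps you correctly flagged, which the paper's spectral-radius argument avoids entirely.
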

\begin{proof}
It follows from
\begin{eqnarray*}
\|AXB^*\|^2&=&\|BX^*A^*AXB^*\|\\&=&s_1\big(BX^*A^*AXB^*\big)\\&=&\lambda_{\max}\big(BX^*A^*AXB^*\big)\quad(\textrm{since}\,BX^*A^*AXB^*\, \textrm{is positive semidefinite}) \\&=&
\lambda_{\max}\big(X^*A^*AXB^*B\big)\\&=&\lambda_{\max}\big(X^*f_1(A^*A)f_2(A^*A)Xg_2(B^*B)g_1(B^*B)\big)
\\&=&\lambda_{\max}\big(g_1(B^*B)X^*f_1(A^*A)f_2(A^*A)Xg_2(B^*B)\big)
\\&\leq&\left\|g_1(B^*B)X^*f_1(A^*A)f_2(A^*A)Xg_2(B^*B)\right\|\\&\leq&\left\|g_1(B^*B)X^*f_1(A^*A)\right\|\,\left\|f_2(A^*A)Xg_2(B^*B)\right\|\\&=&
\left\|f_1(A^*A)Xg_1(B^*B)\right\|\,\left\|f_2(A^*A)Xg_2(B^*B)\right\|
\end{eqnarray*}
that we get the desired result.
\end{proof}
\begin{corollary}
If  $A, B, X\in\mathcal{M}_n$  and $m,n,s,t$ are real numbers such that $m+n=s+t=1$, then
\begin{align}\label{41}
\|AXB^*\|^2\leq\|(A^*A)^mX(B^*B)^s\|\,\|(A^*A)^nX(B^*B)^t\|.
\end{align}
\end{corollary}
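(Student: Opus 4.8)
The plan is to obtain \eqref{41} as an immediate specialization of Theorem \ref{3}: I would choose the four continuous functions appearing there to be the power functions matching the four exponents in the corollary. Concretely, I would set $f_1(t)=t^m$, $f_2(t)=t^n$, $g_1(t)=t^s$ and $g_2(t)=t^t$ for $t\geq0$, so that the operators $f_1(A^*A)=(A^*A)^m$, etc., coincide with the factors appearing in \eqref{41}.

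First I would verify the product hypotheses of Theorem \ref{3}. By the laws of exponents, $f_1(t)f_2(t)=t^m t^n=t^{m+n}=t$ since $m+n=1$, and likewise $g_1(t)g_2(t)=t^s t^t=t^{s+t}=t$ since $s+t=1$. Hence each of the pairs $(f_1,f_2)$ and $(g_1,g_2)$ factors the identity function $t\mapsto t$ exactly as required by the theorem.

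With the hypotheses in hand, substituting these functions directly into inequality \eqref{eq1} gives
\[
\|AXB^*\|^2\leq\|(A^*A)^m X(B^*B)^s\|\,\|(A^*A)^n X(B^*B)^t\|,
\]
which is precisely \eqref{41}; no further computation is needed once the substitution is justified.

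The one point requiring care, and the main obstacle, is the admissibility of these power functions as non-negative continuous functions on $[0,\infty)$, since $m,n,s,t$ are allowed to be arbitrary real numbers. When an exponent is negative, the map $t\mapsto t^m$ is not continuous (indeed not defined) at $t=0$, so the functional calculus is legitimate only if the relevant positive semidefinite matrix is actually positive definite. I would therefore either restrict to the case where $A^*A$ and $B^*B$ are invertible, so that their spectra are contained in some interval $[\delta,\infty)$ with $\delta>0$ on which every power function is continuous and non-negative, or else assume the exponents are non-negative. Under either reading the substitution into Theorem \ref{3} is valid and the corollary follows at once.
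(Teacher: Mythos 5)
Your proof is correct and is exactly the intended argument: the paper states this corollary without proof as the immediate specialization $f_1(t)=t^m$, $f_2(t)=t^n$, $g_1(t)=t^s$, $g_2(t)=t^t$ of Theorem \ref{3}. Your added caveat about negative exponents (where $t\mapsto t^m$ fails to be continuous and non-negative on $[0,\infty)$, so one must assume invertibility of $A^*A$ and $B^*B$ or non-negative exponents) is a legitimate point that the paper glosses over.
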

In the next results, we show some  generalizations of  inequality \eqref{ader} for the operator norm.
\begin{corollary}\label{lolo1}
Let $A, B\in\mathcal{M}_n$ and let  $f_1,f_2,g_1,g_2$ be non-negative continues functions such that $f_1(t)f_2(t)=t$ and $g_1(t)g_2(t)=t\,\,(t\geq0)$. Then
{\begin{eqnarray*}
\|AB^*\|^2\leq
\left\|pf_1(A^*A)^\frac{1}{p}+(1-p)g_1(B^*B)^\frac{1}{1-p} \right\|\left\|(1-p) f_2(A^*A)^\frac{1}{1-p}+pg_2(B^*B)^\frac{1}{p}\right\|,
\end{eqnarray*}}
where $p\in[0,1]$.
\end{corollary}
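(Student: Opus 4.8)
The plan is to reduce the statement to Theorem \ref{3} applied with $X=I$, and then to convert each of the two resulting factors into a sum by invoking the Young inequality $|||C^pD^{1-p}|||\leq|||pC+(1-p)D|||$ recorded in the introduction, which is valid for every unitarily invariant norm and in particular for the operator norm.

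First I would set $X=I$ in \eqref{eq1} to obtain
\begin{align*}
\|AB^*\|^2\leq\|f_1(A^*A)g_1(B^*B)\|\,\|f_2(A^*A)g_2(B^*B)\|.
\end{align*}
It then suffices to bound each factor on the right separately. For the first factor, put $P=f_1(A^*A)$ and $Q=g_1(B^*B)$; these are positive semidefinite because $f_1,g_1$ are non-negative and $A^*A,B^*B$ are positive semidefinite. Assuming $p\in(0,1)$, the functional calculus lets me form the positive semidefinite roots $P^{1/p}$ and $Q^{1/(1-p)}$, so that by construction $(P^{1/p})^{p}(Q^{1/(1-p)})^{1-p}=PQ$. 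Applying the Young inequality with base matrices $P^{1/p},Q^{1/(1-p)}$ and exponent $p$ gives
\begin{align*}
\|f_1(A^*A)g_1(B^*B)\|=\big\|(P^{1/p})^{p}(Q^{1/(1-p)})^{1-p}\big\|\leq\left\|pf_1(A^*A)^{1/p}+(1-p)g_1(B^*B)^{1/(1-p)}\right\|.
\end{align*}

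For the second factor I would repeat the argument with the two exponents interchanged: set $\widetilde P=f_2(A^*A)$ and $\widetilde Q=g_2(B^*B)$, form the roots $\widetilde P^{1/(1-p)}$ and $\widetilde Q^{1/p}$, note that $(\widetilde P^{1/(1-p)})^{1-p}(\widetilde Q^{1/p})^{p}=\widetilde P\widetilde Q$, and apply the Young inequality now with exponent $1-p$ to obtain
\begin{align*}
\|f_2(A^*A)g_2(B^*B)\|\leq\left\|(1-p)f_2(A^*A)^{1/(1-p)}+pg_2(B^*B)^{1/p}\right\|.
\end{align*}
Multiplying the two displayed bounds and combining with the estimate coming from $X=I$ yields the claim.

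The computations are routine; the only point requiring genuine care is the handling of the exponents $1/p$ and $1/(1-p)$, which forces the restriction $p\in(0,1)$ (the endpoints $p=0,1$ must be treated separately or read as limits), together with the verification that each product of positive semidefinite matrices really presents itself in the form $C^pD^{1-p}$ needed to invoke the stated Young inequality. Beyond this bookkeeping I do not expect any substantive obstacle.
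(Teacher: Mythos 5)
Your proposal is correct and follows essentially the same route as the paper: apply Theorem \ref{3} with $X=I$, rewrite each factor as a product of the form $C^pD^{1-p}$ via the functional calculus, and invoke the Young inequality for the operator norm on each factor. Your added care about the endpoints $p=0,1$ and positive semidefiniteness is a reasonable refinement, but the argument is the same.
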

\begin{proof}
Using Theorem  \ref{3}  for $X=I$ we have
\begin{eqnarray*}
\|AB^*\|^2&\leq&
\left\|f_1(A^*A) g_1(B^*B)\right\|\left\|f_2(A^*A) g_2(B^*B)\right\|\\&=&
\left\|\left(f_1(A^*A)^\frac{1}{p}\right)^p\left( g_1(B^*B)^\frac{1}{1-p}\right)^{1-p}\right\|\,
\left\|\left(f_2(A^*A)^\frac{1}{1-p}\right)^{1-p}\left( g_2(B^*B)^\frac{1}{p}\right)^p\right\|
\qquad\qquad\qquad\qquad\qquad\qquad(\textrm{by Theorem \ref{3} )}\\&\leq&
\left\|pf_1(A^*A)^\frac{1}{p}+(1-p)g_1(B^*B)^\frac{1}{1-p} \right\|\left\|(1-p) f_2(A^*A)^\frac{1}{1-p}+pg_2(B^*B)^\frac{1}{p}\right\|\\&&
\qquad\qquad\qquad\qquad\qquad\qquad(\textrm{by the Young inequality}).
\end{eqnarray*}
\end{proof}
\begin{corollary}\label{lolo2}
Let $A, B\in\mathcal{M}_n$ and let $f, g$ be non-negative continues functions such that $f(t)g(t)=t^2\,\,(t\geq0)$. Then
{\begin{eqnarray*}
\|AB^*\|^2&\leq&
\|p f(A^*A) +(1-p) g(B^*B)\|^\frac{1}{2}\,\|(1-p) f(A^*A) +p g(B^*B)\|^\frac{1}{2}
\\&&\times
\left\| p g(A^*A)+(1-p)f(B^*B)\right\|^\frac{1}{2}\left\|(1-p)g(A^*A)+p f(B^*B)\right\|^\frac{1}{2},
\end{eqnarray*}}
where $p\in[0,1]$.
\end{corollary}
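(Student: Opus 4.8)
The plan is to apply Corollary~\ref{lolo1} twice with two different factorizations and then combine the two resulting estimates via the Cauchy--Schwarz inequality for products. First I would specialize the hypothesis $f(t)g(t)=t^2$ by taking the \emph{square roots} so that Corollary~\ref{lolo1} becomes applicable: setting $f_1(t)=f(t)^{1/2}$, $f_2(t)=g(t)^{1/2}$, $g_1(t)=g(t)^{1/2}$, $g_2(t)=f(t)^{1/2}$ gives $f_1(t)f_2(t)=(f(t)g(t))^{1/2}=t$ and likewise $g_1(t)g_2(t)=t$, so these four functions satisfy the hypotheses of Corollary~\ref{lolo1}. The point is that the exponents $\tfrac1p$ and $\tfrac1{1-p}$ appearing in Corollary~\ref{lolo1} will cancel the square roots: for instance $f_1(A^*A)^{1/p}=\big(f(A^*A)^{1/2}\big)^{1/p}$, and when this is paired with the choice $p\mapsto\tfrac{1}{2}\cdot(\text{something})$ one recovers $f(A^*A)$ itself. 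I would therefore choose the parameter in Corollary~\ref{lolo1} to be exactly the value that turns $f_1(A^*A)^{1/p}$ into $f(A^*A)$, namely run the corollary with parameter $\tfrac12$ so that every $1/p=1/(1-p)=2$ exponent undoes the square root cleanly.

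Concretely, applying Corollary~\ref{lolo1} with $p=\tfrac12$ and the above four functions yields
\begin{align*}
\|AB^*\|^2\leq\left\|\tfrac12 f(A^*A)+\tfrac12 g(B^*B)\right\|\,\left\|\tfrac12 g(A^*A)+\tfrac12 f(B^*B)\right\|,
\end{align*}
which already has the correct building blocks but with the symmetric weight $\tfrac12$ rather than the asymmetric weights $p$ and $1-p$ demanded by the statement. To introduce the general $p$ I would instead keep $p$ free and split the estimate into two separate applications. The cleaner route is to write $\|AB^*\|^2=\|AB^*\|\cdot\|AB^*\|$ and bound one factor using Corollary~\ref{lolo1} with the pairing $(f_1,g_1,f_2,g_2)=(f^{1/2},g^{1/2},g^{1/2},f^{1/2})$ and the other factor using the swapped pairing $(g^{1/2},f^{1/2},f^{1/2},g^{1/2})$. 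Each application contributes two norm factors raised to the power $\tfrac12$ after taking square roots of the two sides, and collecting the four resulting $\tfrac12$-powers reproduces exactly the four-factor product in the conclusion, with the weights $p$ and $1-p$ distributed as claimed.

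The main obstacle I anticipate is bookkeeping the exponents so that the weights land on $p$ and $1-p$ in the stated positions rather than on the symmetric $\tfrac12$. The Young-inequality step inside Corollary~\ref{lolo1} forces the exponent $1/p$ on $f_1(A^*A)$, so to obtain the clean term $f(A^*A)$ (with weight $p$, not $pf(A^*A)^{1/p}$ with weight $p$) I must arrange that $f_1(A^*A)^{1/p}=f(A^*A)$, i.e. $f_1=f^{p}$. But $f^{p}$ and the complementary function must still satisfy $f_1f_2=t$, which over-constrains the choice unless I allow the two applications to use reciprocal exponent conventions. I expect the correct device is to apply Corollary~\ref{lolo1} once with the role-assignment that produces the factors $\|pf(A^*A)+(1-p)g(B^*B)\|$ and $\|(1-p)f(A^*A)+pg(B^*B)\|$, and a second time with $f$ and $g$ interchanged to produce $\|pg(A^*A)+(1-p)f(B^*B)\|$ and $\|(1-p)g(A^*A)+pf(B^*B)\|$; multiplying the two inequalities $\|AB^*\|^2\le(\cdots)(\cdots)$ and taking the square root of the product gives the four $\tfrac12$-power factors. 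Verifying that the square-root functions feed through Corollary~\ref{lolo1} with the right powers is the only delicate point, and it is purely a matter of matching $f_1(t)^{1/p}$ to the intended base function.
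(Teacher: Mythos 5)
There is a genuine gap, and it sits exactly at the point you flag as ``the only delicate point.'' Routing the argument through Corollary~\ref{lolo1} cannot produce the stated factors for general $p$. To extract $\left\|pf(A^*A)+(1-p)g(B^*B)\right\|$ from the first factor of Corollary~\ref{lolo1} you need $f_1(t)^{1/p}=f(t)$ and $g_1(t)^{1/(1-p)}=g(t)$, i.e.\ $f_1=f^{p}$, $g_1=g^{1-p}$; to extract $\left\|(1-p)f(A^*A)+pg(B^*B)\right\|$ from the second factor you need $f_2=f^{1-p}$, $g_2=g^{p}$. But then $f_1(t)f_2(t)=f(t)$ and $g_1(t)g_2(t)=g(t)$, whereas the hypothesis of Corollary~\ref{lolo1} demands $f_1f_2=t$ and $g_1g_2=t$; this forces $f(t)=g(t)=t$, which defeats the generality. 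Conversely, if you keep the admissible choice $f_1=f^{1/2}$, $f_2=g^{1/2}$, $g_1=g^{1/2}$, $g_2=f^{1/2}$, the conclusion of Corollary~\ref{lolo1} carries the exponents $\tfrac{1}{2p}$ and $\tfrac{1}{2(1-p)}$ on $f$ and $g$, which collapse to $1$ only when $p=\tfrac12$. Swapping the pairing in a second application and multiplying does not repair this: each application individually still produces the wrong exponents. So the over-constraint you noticed is fatal to the route you propose, not a bookkeeping issue.

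The missing idea is an intermediate interpolation step between Theorem~\ref{3} and the Young inequality. The paper first applies Theorem~\ref{3} with $\sqrt{f},\sqrt{g}$ and $X=I$ to get
\begin{align*}
\|AB^*\|^4\leq\left\|f(A^*A)^{\frac12}g(B^*B)^{\frac12}\right\|^2\left\|g(A^*A)^{\frac12}f(B^*B)^{\frac12}\right\|^2,
\end{align*}
and then applies inequality \eqref{41} (with $m=s=p$, $n=t=1-p$, $X=I$, and $A$, $B$ replaced by $f(A^*A)^{1/2}$, $g(B^*B)^{1/2}$) to each squared factor, obtaining for instance
\begin{align*}
\left\|f(A^*A)^{\frac12}g(B^*B)^{\frac12}\right\|^2\leq\left\|f(A^*A)^{p}g(B^*B)^{1-p}\right\|\,\left\|f(A^*A)^{1-p}g(B^*B)^{p}\right\|.
\end{align*}
It is this second use of the interpolation inequality that installs the asymmetric exponents $p$ and $1-p$ on $f$ and $g$ themselves; only after that does the Young inequality $\|A^pB^{1-p}\|\leq\|pA+(1-p)B\|$ convert each of the four factors into the arithmetic means appearing in the statement, and taking the square root of the resulting bound on $\|AB^*\|^4$ finishes the proof. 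Your proposal skips the \eqref{41} step, and without it the weights $p$, $1-p$ cannot be detached from the exponents.
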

\begin{proof}
Applying Theorem \ref{3} and the Young inequality  we get
\begin{eqnarray*}
\|AB^*\|^4&\leq&
\left\|f(A^*A)^\frac{1}{2} g(B^*B)^\frac{1}{2}\right\|^2\left\|g(A^*A)^\frac{1}{2} f(B^*B)^\frac{1}{2}\right\|^2\\&&
\qquad\qquad\qquad\qquad\qquad\qquad(\textrm{by Theorem \ref{3} for}\,\sqrt{f}\, \textrm{and}\, \sqrt{g})\\&\leq&
\left\|f(A^*A)^p g(B^*B)^{1-p}\right\|\,\left\|f(A^*A)^{1-p} g(B^*B)^{p}\right\|\\&&\times\left\|g(A^*A)^{p} f(B^*B)^{1-p}\right\|\,\left\|g(A^*A)^{1-p} f(B^*B)^{p}\right\|\\&&
\qquad\qquad\qquad\qquad\qquad\qquad(\textrm{by inequality \eqref{41}})\\&\leq&
\left\| p f(A^*A)+(1-p)g(B^*B)\right\|\left\|(1-p)f(A^*A)+p g(B^*B)\right\|\\&&\times
\left\| p g(A^*A)+(1-p)f(B^*B)\right\|\left\|(1-p)g(A^*A)+p f(B^*B)\right\|\\&&
\qquad\qquad\qquad\qquad\qquad\qquad(\textrm{by the Young inequality}).
\end{eqnarray*}
\end{proof}

\section{Some interpolations for unitarily invariant norms}
In this section, by applying some ideas of \cite{kit1} we generalize some interpolations for an arbitrary unitarily invariant norm.\\
Let $Q_{k,n}$ denote the set of all strictly increasing $k$-tuples chosen from ${1, 2,\cdots, n}$, i.e.
$I \in Q_{k,n}$ if $I = (i_1, i_2, \cdots , i_k)$, where $1\leq i_1<i_2<\cdots< i_k\leq n$. The following lemma
gives some properties of the $k$th antisymmetric tensor
powers of matrices in $\mathcal{M}_n$; see \cite[p.18]{bha3}.
\begin{lemma}\label{1}
Let $A, B\in\mathcal{M}_n$.
Then\\
$(a)\,\, (\wedge^kA)(\wedge^kB)=\wedge^k(AB)$ for $k=1,\cdots,n$.\\
$(b)\,\,(\wedge^kA)^*=\wedge^kA^*$ for $k=1,\cdots,n$.\\
$(c)\,\,(\wedge^kA)^{-1}=\wedge^kA^{-1}$ for $k=1,\cdots,n$.\\
$(d)$\,\,If $s_1, s_2, \cdots , s_n$ are the singular values of $A$, then the singular values of $\wedge^kA$ are
$s_{i_1}, s_{i_2}, \cdots , s_{i_k}$, where $({i_1}, {i_2}, \cdots , {i_k})\in Q_{k,n}$.\\
\end{lemma}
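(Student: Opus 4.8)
The plan is to argue directly from the construction of the $k$th antisymmetric tensor power. Writing $V=\mathbb{C}^n$ with its standard inner product, I regard $\wedge^kV$ as the $\binom{n}{k}$-dimensional space of alternating tensors with distinguished basis $\{e_{i_1}\wedge\cdots\wedge e_{i_k}:(i_1,\ldots,i_k)\in Q_{k,n}\}$, and I take $\wedge^kA$ to be the operator determined on decomposable elements by $(\wedge^kA)(v_1\wedge\cdots\wedge v_k)=(Av_1)\wedge\cdots\wedge(Av_k)$. Since the decomposable elements span $\wedge^kV$ and each claimed identity is linear in the relevant variables, it is enough to verify all four parts on decomposable elements.

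For part $(a)$ I would compute on a decomposable element, $(\wedge^kA)(\wedge^kB)(v_1\wedge\cdots\wedge v_k)=(\wedge^kA)(Bv_1\wedge\cdots\wedge Bv_k)=(ABv_1)\wedge\cdots\wedge(ABv_k)$, which is precisely $\wedge^k(AB)$ applied to the same element; this is just functoriality of the exterior power. Part $(c)$ then follows formally: noting that $\wedge^kI$ is the identity on $\wedge^kV$, applying $(a)$ to $AA^{-1}=A^{-1}A=I$ yields $(\wedge^kA)(\wedge^kA^{-1})=(\wedge^kA^{-1})(\wedge^kA)=\wedge^kI=I$, so $\wedge^kA^{-1}$ is the two-sided inverse of $\wedge^kA$.

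The one genuinely computational step is part $(b)$, which also pins down the inner product on $\wedge^kV$ that makes the adjoint meaningful. I would use the induced (Gram) inner product
\[
\langle u_1\wedge\cdots\wedge u_k,\;v_1\wedge\cdots\wedge v_k\rangle=\det\big(\langle u_i,v_j\rangle\big),
\]
and then verify the adjoint relation on decomposables from the single observation that $\det(\langle Au_i,v_j\rangle)=\det(\langle u_i,A^*v_j\rangle)$, since $A$ may be moved across the inner product entrywise inside the Gram determinant. Reading the two sides as inner products on $\wedge^kV$ gives $\langle(\wedge^kA)u,v\rangle=\langle u,(\wedge^kA^*)v\rangle$ for all decomposable $u,v$, i.e. $(\wedge^kA)^*=\wedge^kA^*$. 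I expect the main bookkeeping here to be the verification that this determinant is genuinely a Hermitian positive definite inner product, so that the word \emph{adjoint} is well posed; this is standard Gram-determinant positivity.

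Finally, part $(d)$ is the payoff and follows formally from $(a)$ and $(b)$. Taking a singular value decomposition $A=U\Sigma V^*$ with $U,V$ unitary and $\Sigma=\mathrm{diag}(s_1,\ldots,s_n)$, parts $(a)$ and $(b)$ give $\wedge^kA=(\wedge^kU)(\wedge^k\Sigma)(\wedge^kV)^*$. The factors $\wedge^kU$ and $\wedge^kV$ are unitary, because $(\wedge^kU)^*(\wedge^kU)=(\wedge^kU^*)(\wedge^kU)=\wedge^k(U^*U)=\wedge^kI=I$; and $\wedge^k\Sigma$ is diagonal in the basis $\{e_{i_1}\wedge\cdots\wedge e_{i_k}\}$, with non-negative diagonal entries since $(\wedge^k\Sigma)(e_{i_1}\wedge\cdots\wedge e_{i_k})=s_{i_1}\cdots s_{i_k}\,(e_{i_1}\wedge\cdots\wedge e_{i_k})$. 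Hence the displayed factorization is itself a singular value decomposition of $\wedge^kA$, and so its singular values are exactly the products $s_{i_1}\cdots s_{i_k}$ as $(i_1,\ldots,i_k)$ ranges over $Q_{k,n}$, the factors being the singular values of $A$ indexed by the chosen tuple.
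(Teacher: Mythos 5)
Your proof is correct, but note that the paper does not actually prove this lemma: it is quoted verbatim from Bhatia's \emph{Matrix Analysis} (the citation to \cite[p.18]{bha3}), so there is no in-paper argument to compare against. What you have written is a clean, self-contained version of the standard textbook derivation --- functoriality on decomposable tensors for $(a)$, the Gram-determinant inner product $\langle u_1\wedge\cdots\wedge u_k, v_1\wedge\cdots\wedge v_k\rangle=\det(\langle u_i,v_j\rangle)$ for $(b)$, the formal consequence $(\wedge^kA)(\wedge^kA^{-1})=\wedge^kI=I$ for $(c)$, and the observation that $\wedge^k$ applied to an SVD of $A$ is again an SVD for $(d)$ --- and each step is sound. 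One point worth flagging: as printed, part $(d)$ of the lemma says the singular values of $\wedge^kA$ are ``$s_{i_1}, s_{i_2},\cdots, s_{i_k}$,'' which is a typo for the \emph{products} $s_{i_1}s_{i_2}\cdots s_{i_k}$ indexed by $Q_{k,n}$; your proof correctly establishes the product version, and that is exactly the form the paper uses later (in the proof of Theorem \ref{main2}, where $s_1(\wedge^kM)=\prod_{j=1}^{k}s_j(M)$ is needed). The only step you leave as ``standard'' is positive definiteness of the Gram inner product on $\wedge^k\mathbb{C}^n$, which is acceptable to cite, though for completeness it follows immediately from the orthonormality of the basis $\{e_{i_1}\wedge\cdots\wedge e_{i_k}\}$ that you already introduce.
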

Now, we show inequality \eqref{41} for an arbitrary unitarily invariant norm.
\begin{theorem}\label{main2}
Let $A, B, X\in\mathcal{M}_n$ and $|||\cdot|||$ be an arbitrary unitarily invariant norm. Then
\begin{align}\label{4}
|||AXB^*|||^2\leq|||(A^*A)^mX(B^*B)^s|||\,|||(A^*A)^nX(B^*B)^t|||,
\end{align}
where $m,n,s,t$ are real numbers such that $m+n=s+t=1$. In particular, if $A$, $B$ are positive definite
\begin{eqnarray}\label{hia}
|||A^\frac{1}{2}XB^\frac{1}{2}|||^2\leq|||A^p XB^{1-p}|||\,|||A^{1-p}XB^p|||,
\end{eqnarray}
where $p\in[0,1]$.
\end{theorem}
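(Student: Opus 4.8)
The plan is to lift the operator-norm inequality \eqref{41} to an arbitrary unitarily invariant norm by applying it to the antisymmetric tensor powers of Lemma \ref{1}, and then to split the resulting geometric-mean bound into a product of two norms by means of the Cauchy--Schwarz inequality \eqref{a24}. Throughout I abbreviate $C=(A^*A)^mX(B^*B)^s$ and $D=(A^*A)^nX(B^*B)^t$; where some exponent is negative I assume $A,B$ invertible, exactly as is implicit in \eqref{41}.

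First I would fix $k\in\{1,\dots,n\}$ and apply \eqref{41}, which holds in every matrix dimension, to the triple $\wedge^kA,\ \wedge^kX,\ \wedge^kB$. By Lemma \ref{1}(a),(b) one has $(\wedge^kA)(\wedge^kX)(\wedge^kB)^*=\wedge^k(AXB^*)$ and $(\wedge^kA)^*(\wedge^kA)=\wedge^k(A^*A)$, and since the real-power functional calculus commutes with $\wedge^k$ on positive matrices — $\big(\wedge^k(A^*A)\big)^m$ and $\wedge^k\big((A^*A)^m\big)$ are positive and share the same eigenvalues by Lemma \ref{1}(d) — the right-hand side of \eqref{41} collapses, again via Lemma \ref{1}(a), to $\|\wedge^kC\|\,\|\wedge^kD\|$. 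Because $\|\wedge^kM\|=s_1(\wedge^kM)=\prod_{j=1}^k s_j(M)$, this reads
\begin{align*}
\Big(\prod_{j=1}^k s_j(AXB^*)\Big)^2\le\Big(\prod_{j=1}^k s_j(C)\Big)\Big(\prod_{j=1}^k s_j(D)\Big),\qquad k=1,\dots,n,
\end{align*}
so that $\big(s_j(AXB^*)\big)_j$ is weakly log-majorized by the decreasing sequence $\big(\sqrt{s_j(C)s_j(D)}\big)_j$.

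Next, since weak log-majorization of nonnegative decreasing sequences implies weak majorization and hence domination in every symmetric gauge function $\Phi$ (equivalently, by Fan dominance, in every unitarily invariant norm; see \cite{bha3}), I obtain
\begin{align*}
|||AXB^*|||=\Phi\big(s_1(AXB^*),\dots,s_n(AXB^*)\big)\le\Phi\big(\sqrt{s_1(C)s_1(D)},\dots,\sqrt{s_n(C)s_n(D)}\big).
\end{align*}
Finally I would apply \eqref{a24} with $X=I$ to the positive diagonal matrices $\mathrm{diag}(\sqrt{s_j(C)})$ and $\mathrm{diag}(\sqrt{s_j(D)})$, whose product is $\mathrm{diag}(\sqrt{s_j(C)s_j(D)})$; this gives $\Phi(\sqrt{s(C)s(D)})^2\le\Phi(s(C))\,\Phi(s(D))=|||C|||\,|||D|||$. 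Combining the last two displays yields $|||AXB^*|||^2\le|||C|||\,|||D|||$, which is \eqref{4}. The special case \eqref{hia} is then immediate: for positive definite $A,B$ one substitutes $A^{1/2},B^{1/2}$ for $A,B$ (so that $A^*A\to A$, $B^*B\to B$) and takes $m=t=p$, $n=s=1-p$.

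The main obstacle is the reduction in the second paragraph: one must verify carefully that the real-power functional calculus genuinely commutes with $\wedge^k$ — this is precisely where Lemma \ref{1}(d) is indispensable — and one must recognize that the tensor-power inequality is a statement about products of singular values, i.e.\ weak log-majorization, rather than about Ky Fan sums. It is this observation that makes the majorization machinery together with the diagonal Cauchy--Schwarz step the correct device for converting a single operator-norm estimate into the desired product of two unitarily invariant norms.
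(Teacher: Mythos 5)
Your proposal is correct and follows essentially the same route as the paper: apply the operator-norm inequality to the antisymmetric tensor powers, use Lemma \ref{1} to turn the resulting bound into weak log-majorization of singular values, and finish with a Cauchy--Schwarz argument before passing to all unitarily invariant norms. The only difference is in the last step, and it is cosmetic but arguably to your advantage: where the paper applies the scalar Cauchy--Schwarz inequality to the Ky Fan sums and then invokes Fan dominance (which, as literally stated, compares two norms rather than a norm with a product of two norms), you pass from weak majorization to symmetric gauge functions and then apply \eqref{a24} to the diagonal matrices $\mathrm{diag}(\sqrt{s_j(C)})$, $\mathrm{diag}(\sqrt{s_j(D)})$, which makes the splitting $\Phi(\sqrt{s(C)s(D)})^2\le|||C|||\,|||D|||$ fully explicit.
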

\begin{proof}
If we replace $A$, $B$ and $X$ by $\wedge^kA$, $\wedge^kB$ and $\wedge^kX$, their $k$th antisymmetric tensor powers in inequality \eqref{eq1} and  apply Lemma \ref{1} , then  we have
\begin{align*}
\left\|\wedge^kAXB^*\right\|^2\leq\left\|\wedge^k(A^*A)^mX(B^*B)^s\right\|\,\left\|\wedge^k(A^*A)^nX(B^*B)^t\right\|
\end{align*}
that is equivalent to
\begin{align*}
s_1^2\left(\wedge^kAXB^*\right)\leq s_1\left(\wedge^k(A^*A)^mX(B^*B)^s\right)\,s_1\left(\wedge^k(A^*A)^nX(B^*B)^t\right).
\end{align*}
Applying Lemma \ref{1}(d), we have
\begin{eqnarray}\label{bobo}
\prod_{j=1}^{k}s_j\left(AXB^*\right)\nonumber&\leq& \prod_{j=1}^{k}s_j^{\frac{1}{2}}\left((A^*A)^mX(B^*B)^s\right)\prod_{j=1}^{k}s_j^{\frac{1}{2}}\left((A^*A)^nX(B^*B)^t\right)
\nonumber\\&\leq& \prod_{j=1}^{k}s_j^{\frac{1}{2}}\left((A^*A)^mX(B^*B)^s\right)s_j^{\frac{1}{2}}\left((A^*A)^nX(B^*B)^t\right),
\end{eqnarray}
where $k=1,\cdots,n$. Inequality \eqref{bobo} implies that
\begin{eqnarray*}
\sum_{j=1}^{k}s_j\left(AXB^*\right)&\leq& \sum_{j=1}^{k}s_j^{\frac{1}{2}}\left((A^*A)^mX(B^*B)^s\right)s_j^{\frac{1}{2}}\left((A^*A)^nX(B^*B)^t\right)
\\&\leq& \left(\sum_{j=1}^{k}s_j\left((A^*A)^mX(B^*B)^s\right)\right)^{\frac{1}{2}}
\left(\sum_{j=1}^{k}s_j\left((A^*A)^nX(B^*B)^t\right)\right)^{\frac{1}{2}}\\&&
\qquad\qquad\qquad\qquad\qquad\qquad\textrm{(by the Cauchy-Schwarz inequality)},
\end{eqnarray*}
where $k=1,\cdots,n$. Hence
\begin{align*}
\left\|AXB^*\right\|_{(k)}^2\leq \|(A^*A)^mX(B^*B)^s\|_{(k)}\|(A^*A)^nX(B^*B)^t\|_{(k)}.
\end{align*}
Now, using the Fan dominace theorem \cite[p.98]{bha3}, we get the desired result.
\end{proof}

Now, by using  inequality \eqref{hia}, Theorem \ref{main2} and a same argument in the proof of Corollaries  \ref{lolo1} and \ref{lolo2} , we get the following results that these inequalities  are some generalizations of the Audenaert inequality \eqref{ader}.
\begin{corollary}\label{main4}
Let $A, B\in\mathcal{M}_n$, $m,n,s,t$ be real numbers such that $m+n=s+t=1$ and $|||\cdot|||$ be an arbitrary unitarily invariant norm. Then
\begin{align*}
\left|\left|\left|AB^*\right|\right|\right|^2\nonumber&\leq
\left|\left|\left|p(A^*A)^{\frac{m}{p}}+ (1-p)(B^*B)^{\frac{s}{1-p}}\right|\right|\right|\,\left|\left|\left|(1-p)(A^*A)^{\frac{n}{1-p}}+ p(B^*B)^{\frac{t}{p}}\right|\right|\right|,
\end{align*}
where $p\in[0,1]$.
\end{corollary}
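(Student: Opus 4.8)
The plan is to mimic the proof of Corollary~\ref{lolo1}, with the operator-norm inequality \eqref{eq1} replaced by its unitarily invariant counterpart \eqref{4} from Theorem~\ref{main2}. First I would specialize Theorem~\ref{main2} to $X=I$. Since $m+n=s+t=1$, inequality \eqref{4} immediately gives
\[
|||AB^*|||^2\le|||(A^*A)^m(B^*B)^s|||\,|||(A^*A)^n(B^*B)^t|||,
\]
so the task reduces to bounding each of the two factors on the right by the corresponding arithmetic-mean expression in the statement.

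For the first factor I would write the two powers as a $p$-th power times a $(1-p)$-th power, namely $(A^*A)^m=\bigl((A^*A)^{m/p}\bigr)^p$ and $(B^*B)^s=\bigl((B^*B)^{s/(1-p)}\bigr)^{1-p}$, so that $(A^*A)^m(B^*B)^s=\bigl((A^*A)^{m/p}\bigr)^p\bigl((B^*B)^{s/(1-p)}\bigr)^{1-p}$. The Young inequality for unitarily invariant norms, $|||C^{\alpha}D^{1-\alpha}|||\le|||\alpha C+(1-\alpha)D|||$ with $\alpha=p$, then yields $|||(A^*A)^m(B^*B)^s|||\le|||p(A^*A)^{m/p}+(1-p)(B^*B)^{s/(1-p)}|||$. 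For the second factor I would instead group it with the weight $1-p$ on the $A$-part, $(A^*A)^n(B^*B)^t=\bigl((A^*A)^{n/(1-p)}\bigr)^{1-p}\bigl((B^*B)^{t/p}\bigr)^{p}$, and apply Young with $\alpha=1-p$ to obtain $|||(A^*A)^n(B^*B)^t|||\le|||(1-p)(A^*A)^{n/(1-p)}+p(B^*B)^{t/p}|||$. Multiplying these two estimates together produces exactly the claimed bound. I would keep the factors in the order ``$A$-part times $B$-part'' throughout, precisely so that no rearrangement $MN\leftrightarrow NM$ is needed, since a general unitarily invariant norm is not invariant under such a swap.

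The step requiring genuine care is the invocation of the Young inequality, which in the form quoted in the excerpt requires \emph{positive definite} matrices, whereas $A^*A$ and $B^*B$ are only positive semidefinite and the exponents $m/p,\,s/(1-p),\,n/(1-p),\,t/p$ may be negative (so a fractional power of a singular $A^*A$ need not even be defined). I expect this to be the main obstacle, and I would dispose of it by first proving the inequality under the assumption that $A$ and $B$ are invertible---then $A^*A,B^*B$ are positive definite and all indicated powers are well defined and positive definite---and afterwards recovering the general case by replacing $A,B$ with $A+\varepsilon I,\,B+\varepsilon I$ and letting $\varepsilon\to0^+$, using continuity of the norm and of the functional calculus. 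One should also tacitly take $p\in(0,1)$, the endpoints being degenerate. Apart from this positivity bookkeeping, the argument is a direct substitution, so I anticipate no further difficulty.
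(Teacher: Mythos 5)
Your argument is exactly the paper's: the authors prove this corollary by specializing Theorem~\ref{main2} to $X=I$ and then applying the matrix Young inequality to each of the two factors, precisely as in the proof of Corollary~\ref{lolo1}. Your additional care about positive definiteness, possibly negative exponents, and the endpoints $p=0,1$ addresses genuine issues that the paper itself silently glosses over.
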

\begin{corollary}\label{main3}
Let $A, B\in\mathcal{M}_n$, $m,n,s,t$ be real numbers such that $m+n=s+t=2$ and $|||\cdot|||$ be an arbitrary unitarily invariant norm. Then
{\begin{align}\label{man3}
\left|\left|\left|AB^*\right|\right|\right|^2\nonumber&\leq
\left|\left|\left|p(A^*A)^{m}+ (1-p)(B^*B)^{s}\right|\right|\right|^\frac{1}{2}\left|\left|\left|(1-p)(A^*A)^{m}+ p(B^*B)^{s}\right|\right|\right|^\frac{1}{2}\\&\quad\times\left|\left|\left|p(A^*A)^{n}+ (1-p)(B^*B)^{t}\right|\right|\right|^\frac{1}{2}\left|\left|\left|(1-p)(A^*A)^{n}+p(B^*B)^{t}\right|\right|\right|^\frac{1}{2},
\end{align}}
in which $p\in[0,1]$.
\end{corollary}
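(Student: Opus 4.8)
The plan is to imitate the proof of Corollary \ref{lolo2}, replacing the operator norm by the unitarily invariant norm $|||\cdot|||$ and using the power functions $t^{m},t^{n}$ on the $A^{*}A$-side and $t^{s},t^{t}$ on the $B^{*}B$-side in place of the complementary pair $f,g$. The crucial bookkeeping observation is that the hypotheses $m+n=2$ and $s+t=2$ are exactly what makes the \emph{halved} exponents admissible for Theorem \ref{main2}, since $\frac{m}{2}+\frac{n}{2}=1$ and $\frac{s}{2}+\frac{t}{2}=1$. Because the exponents $m,n,s,t$ are arbitrary reals, I would first assume $A,B$ invertible so that all powers of $A^{*}A$ and $B^{*}B$ are well defined and positive definite; the general case then follows by a routine continuity argument.

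First I would apply inequality \eqref{4} of Theorem \ref{main2} with $X=I$ and with the exponents $\frac{m}{2},\frac{n}{2}$ on $A^{*}A$ and $\frac{s}{2},\frac{t}{2}$ on $B^{*}B$ (admissible by the remark above), obtaining
\[
|||AB^{*}|||^{2}\leq|||(A^{*}A)^{\frac{m}{2}}(B^{*}B)^{\frac{s}{2}}|||\,|||(A^{*}A)^{\frac{n}{2}}(B^{*}B)^{\frac{t}{2}}|||.
\]
Squaring this and then estimating the two factors one at a time is the heart of the argument.

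For the first factor I would regard $C=(A^{*}A)^{m}$ and $D=(B^{*}B)^{s}$ as positive definite matrices and invoke inequality \eqref{hia} in the form $|||C^{\frac{1}{2}}D^{\frac{1}{2}}|||^{2}\leq|||C^{p}D^{1-p}|||\,|||C^{1-p}D^{p}|||$, which reads
\[
|||(A^{*}A)^{\frac{m}{2}}(B^{*}B)^{\frac{s}{2}}|||^{2}\leq|||(A^{*}A)^{mp}(B^{*}B)^{s(1-p)}|||\,|||(A^{*}A)^{m(1-p)}(B^{*}B)^{sp}|||.
\]
Applying the Young inequality $|||C^{p}D^{1-p}|||\leq|||pC+(1-p)D|||$ to each of these two terms then produces $|||p(A^{*}A)^{m}+(1-p)(B^{*}B)^{s}|||$ and $|||(1-p)(A^{*}A)^{m}+p(B^{*}B)^{s}|||$. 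Repeating the identical computation with $n,t$ in place of $m,s$ bounds the second factor by $|||p(A^{*}A)^{n}+(1-p)(B^{*}B)^{t}|||\,|||(1-p)(A^{*}A)^{n}+p(B^{*}B)^{t}|||$. Multiplying the two bounds gives $|||AB^{*}|||^{4}$ dominated by the product of the four arithmetic-mean norms, and taking square roots yields precisely \eqref{man3}.

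I expect the obstacle here to be careful bookkeeping rather than a genuine mathematical difficulty. One must check that each use of \eqref{hia} and of the Young inequality involves positive definite matrices and weights $p,1-p\in[0,1]$, and one must keep the four factors correctly paired so that the exponents land on $(m,s)$ in the first pair and on $(n,t)$ in the second. The most delicate point is the positivity required to raise $A^{*}A$ and $B^{*}B$ to negative real powers, which is the reason for first reducing to the invertible case and appealing to continuity at the end.
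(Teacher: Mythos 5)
Your proposal is correct and follows essentially the same route as the paper, which proves Corollary \ref{main3} by exactly the argument of Corollary \ref{lolo2} with Theorem \ref{main2} and inequality \eqref{hia} replacing their operator-norm counterparts: first \eqref{4} with $X=I$ and halved exponents, then \eqref{hia} on each factor, then the Young inequality. Your extra care about invertibility for negative real exponents is a point the paper silently glosses over, but it does not change the argument.
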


\begin{remark}
If we put $n=m=s=t=1$ in inequality \eqref{man3}, then we obtain the Audenaert inequality \eqref{ader}.
Also, if we use inequality \eqref{momo2}, Corollaries \ref{main4} and \ref{main3}, then similar to Corollaries \ref{lolo1} and \ref{lolo2} we get the following inequalities
{\begin{align}\label{kitkit}
\|AXB^*\|_2^2\leq
\|p(A^*A)^{\frac{m}{p}}X+ (1-p)X(B^*B)^{\frac{s}{1-p}}\|_2\|(1-p)(A^*A)^{\frac{n}{1-p}}X+ pX(B^*B)^{\frac{t}{p}}\|_2,
\end{align}}
where $A, B\in\mathcal{M}_n$, $m,n,s,t$ are real numbers  such that $m+n=s+t=1$, $p\in[0,1]$ and
 {\begin{align*}
\|AXB^*\|_2^2\nonumber&\leq
\|p(A^*A)^{m}X+ (1-p)X(B^*B)^{s}\|_2^\frac{1}{2}\|(1-p)(A^*A)^{m}X+ pX(B^*B)^{s}\|_2^\frac{1}{2}\\&\quad\times\|p(A^*A)^{n}X+ (1-p)X(B^*B)^{t}\|_2^\frac{1}{2}\|(1-p)(A^*A)^{n}X+pX(B^*B)^{t}\|_2^\frac{1}{2}
\end{align*}}
for $A, B\in\mathcal{M}_n$, real numbers $m,n,s,t$  such that $m+n=s+t=2$ and $p\in[0,1]$.  These inequalities are generalizations of \eqref{newader} for the Hilbert-Schmidt norms.
\end{remark}
In the following theorem, we show a refinement of inequality \eqref{kitkit}  for the Hilbert-Schmidt norm.
\begin{theorem}\label{main3e}
Let $A, B, X\in\mathcal{M}_n$. Then
{\footnotesize\begin{eqnarray*}
\left\|AXB^*\right\|_2^2&\leq&
\left(\left\|p (A^*A)^\frac{m}{p} X+(1-p) X(B^*B)^\frac{s}{1-p}\right\|_2^2\,-r^2\left\|(A^*A)^\frac{m}{p}X-X(B^*B)^\frac{s}{1-p}\right\|_2^2\right)\nonumber\\&&
\times\left(\left\|(1-p) (A^*A)^\frac{n}{1-p} X+p X(B^*B)^\frac{t}{p}\right\|_2^2\,-r^2\left\|(A^*A)^\frac{n}{1-p}X-X(B^*B)^\frac{t}{p}\right\|_2^2\right),
\end{eqnarray*}}
in which $m,n,s,t$ are real numbers such that $m+n=s+t=1$, $p\in[0,1]$ and $r=\min\{p,1-p\}$.
\end{theorem}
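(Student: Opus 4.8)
The plan is to reproduce the Hilbert--Schmidt argument that underlies \eqref{kiteq2}, but with the first powers of $A^*A$ and $B^*B$ replaced by the fractional powers prescribed by $m,n,s,t$. First I would diagonalize the two positive semidefinite matrices: write $A^*A=U\,\mathrm{diag}(\lambda_1,\dots,\lambda_n)\,U^*$ and $B^*B=V\,\mathrm{diag}(\mu_1,\dots,\mu_n)\,V^*$ with $\lambda_i,\mu_j\ge0$ and unitary $U,V$, and put $Y=U^*XV=[y_{ij}]$. Since the Hilbert--Schmidt norm is unitarily invariant and $\|AXB^*\|_2^2=\mathrm{tr}\big(X^*(A^*A)X(B^*B)\big)$, a direct computation gives the diagonal representations
\begin{align*}
\|AXB^*\|_2^2=\sum_{i,j}\lambda_i\mu_j\,|y_{ij}|^2,\qquad \|\varphi(A^*A)X-X\psi(B^*B)\|_2^2=\sum_{i,j}\big(\varphi(\lambda_i)-\psi(\mu_j)\big)^2|y_{ij}|^2,
\end{align*}
valid for any continuous $\varphi,\psi\ge0$, with the analogous formula for the convex-combination terms. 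This rewrites both sides of the theorem as weighted sums against the fixed nonnegative weights $w_{ij}=|y_{ij}|^2$.

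Next I would use $m+n=s+t=1$ to factor $\lambda_i\mu_j=(\lambda_i^{m}\mu_j^{s})(\lambda_i^{n}\mu_j^{t})$ and apply the Cauchy--Schwarz inequality in the weights $w_{ij}$ to split the resulting sum into the product of $\sum_{i,j}\lambda_i^{2m}\mu_j^{2s}w_{ij}$ and $\sum_{i,j}\lambda_i^{2n}\mu_j^{2t}w_{ij}$. The crux is a scalar estimate: for $q\in[0,1]$, $r=\min\{q,1-q\}$ and $a,b\ge0$,
\begin{align*}
a^{2q}b^{2(1-q)}\le\big(qa+(1-q)b\big)^2-r^2(a-b)^2,
\end{align*}
which is the squared form of the refined Young inequality \eqref{momo}; after the substitution $u=\sqrt a,\ v=\sqrt b$ it reduces to the elementary bound $r(u^2+v^2)\le qu^2+(1-q)v^2$. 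Applying this with $q=p$, $a=\lambda_i^{m/p}$, $b=\mu_j^{s/(1-p)}$ bounds the first sum by the first factor of the theorem, since then $a^{2p}b^{2(1-p)}=\lambda_i^{2m}\mu_j^{2s}$; applying it with $q=1-p$, $a=\lambda_i^{n/(1-p)}$, $b=\mu_j^{t/p}$ bounds the second sum by the second factor, since $a^{2(1-p)}b^{2p}=\lambda_i^{2n}\mu_j^{2t}$. The exponents $\tfrac{m}{p},\tfrac{s}{1-p},\tfrac{n}{1-p},\tfrac{t}{p}$ are chosen precisely so that these identities hold.

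Recognizing the two bounded sums as the Hilbert--Schmidt factors in the statement and combining them through the Cauchy--Schwarz step then yields
\begin{align*}
\|AXB^*\|_2^2&\le\Big(\|p(A^*A)^{\frac{m}{p}}X+(1-p)X(B^*B)^{\frac{s}{1-p}}\|_2^2-r^2\|(A^*A)^{\frac{m}{p}}X-X(B^*B)^{\frac{s}{1-p}}\|_2^2\Big)\\
&\quad\times\Big(\|(1-p)(A^*A)^{\frac{n}{1-p}}X+pX(B^*B)^{\frac{t}{p}}\|_2^2-r^2\|(A^*A)^{\frac{n}{1-p}}X-X(B^*B)^{\frac{t}{p}}\|_2^2\Big),
\end{align*}
which is the claimed inequality. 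The main obstacle I anticipate is twofold: establishing the displayed scalar inequality with the correct constant $r=\min\{p,1-p\}$ (this is exactly where the refining term $-r^2(a-b)^2$ is produced), and the careful exponent bookkeeping so that the two applications of that inequality --- one with weights $(p,1-p)$, the other with $(1-p,p)$ --- land on the two factors exactly as written.
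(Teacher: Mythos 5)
Your overall route is in substance the paper's own proof, re-derived from scratch: your Cauchy--Schwarz step in the weights $w_{ij}$ is exactly the Hilbert--Schmidt case of inequality \eqref{4}, and your scalar estimate $a^{2q}b^{2(1-q)}\le\big(qa+(1-q)b\big)^2-r^2(a-b)^2$ is precisely \eqref{momo} for $1\times 1$ matrices (the paper applies \eqref{momo} at the matrix level; you apply it entrywise after diagonalization). One local repair: your parenthetical claim that this scalar inequality ``reduces to $r(u^2+v^2)\le qu^2+(1-q)v^2$'' is not a proof of it; the clean argument is to factor, say for $q\le\tfrac12$ (so $r=q$), $\big(qa+(1-q)b\big)^2-q^2(a-b)^2=b\big(2qa+(1-2q)b\big)\ge b\,a^{2q}b^{1-2q}=a^{2q}b^{2(1-q)}$ by weighted AM--GM, with the symmetric computation for $q>\tfrac12$. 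Your exponent bookkeeping ($q=p$ with $a=\lambda_i^{m/p}$, $b=\mu_j^{s/(1-p)}$, then $q=1-p$ with $a=\lambda_i^{n/(1-p)}$, $b=\mu_j^{t/p}$) is correct and matches the paper's substitutions.

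There is, however, a genuine gap at the final combination step --- and you have faithfully reproduced the paper's own error there. Cauchy--Schwarz gives
\begin{align*}
\Big(\sum_{i,j}\lambda_i\mu_j w_{ij}\Big)^{2}\le\Big(\sum_{i,j}\lambda_i^{2m}\mu_j^{2s}w_{ij}\Big)\Big(\sum_{i,j}\lambda_i^{2n}\mu_j^{2t}w_{ij}\Big),
\end{align*}
that is, it bounds $\|AXB^*\|_2^{4}$, not $\|AXB^*\|_2^{2}$, by the product of the two sums; your phrase ``split the resulting sum into the product'' silently drops the square roots. After bounding each sum by the corresponding factor $f_1,f_2$, what your steps actually establish is $\|AXB^*\|_2^{2}\le f_1^{1/2}f_2^{1/2}$, whereas your final display (and the theorem as stated) claims $\|AXB^*\|_2^{2}\le f_1 f_2$. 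These are not equivalent, and the unrooted version is in fact false: take $1\times 1$ matrices $A=B=(c)$, $X=(1)$, $p=\tfrac12$, $m=n=s=t=\tfrac12$; then both refinement terms vanish, the left side is $c^4$, and the claim becomes $c^4\le c^8$, which fails for $0<c<1$ (the same degree count shows \eqref{kiteq2} as quoted needs exponents $\tfrac12$ on each factor as well). The paper's proof commits the identical fault when it passes from the product of the two first-power norms $\|(A^*A)^mX(B^*B)^s\|_2\,\|(A^*A)^nX(B^*B)^t\|_2$ to the product of the two parenthesized squared-norm expressions without the $\tfrac12$ powers. So your argument is correct up to that point and proves the corrected statement $\|AXB^*\|_2^{2}\le f_1^{1/2}f_2^{1/2}$; the last line overshoots by a square. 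Two further caveats shared with the paper: since $m,n,s,t$ are arbitrary reals, exponents such as $\tfrac{m}{p}$ may be negative, so $A,B$ must be nonsingular for $(A^*A)^{m/p}$ to be defined, and $p$ must lie in the open interval $(0,1)$ for the exponents to make sense at all.
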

\begin{proof}
Using inequality \eqref{4}, we obtain
{\footnotesize \begin{eqnarray*}
\left\|AXB^*\right\|_2^2&\leq&\left\|(A^*A)^mX(B^*B)^s\right\|_2\left\|(A^*A)^nX(B^*B)^t\right\|_2\\&=&\left\|\left((A^*A)^\frac{m}{p}\right)^pX\left((B^*B)^\frac{s}{1-p}\right)^{1-p}\right\|_2\left\|\left((A^*A)^\frac{n}{1-p}\right)^{1-p}X\left((B^*B)^\frac{t}{p}\right)^{p}\right\|_2\\&\leq&
\left(\left\|p (A^*A)^\frac{m}{p} X+(1-p) X(B^*B)^\frac{s}{1-p}\right\|_2^2\,-r^2\left\|(A^*A)^\frac{m}{p}X-X(B^*B)^\frac{s}{1-p}\right\|_2^2\right)\\&&
\times\left(\left\|(1-p) (A^*A)^\frac{n}{1-p} X+p X(B^*B)^\frac{t}{p}\right\|_2^2\,-r^2\left\|(A^*A)^\frac{n}{1-p}X-X(B^*B)^\frac{t}{p}\right\|_2^2\right),
\end{eqnarray*}}
where $p\in[0,1]$ and $r=\min\{p,1-p\}$, and the proof is complete.
\end{proof}
Theorem \ref{main3e} includes a special case as follows.
\begin{corollary}\cite[Theorem 2.5]{kit1}
Let $A, B, X\in\mathcal{M}_n$. Then
{\footnotesize\begin{eqnarray*}
\left\|AXB^*\right\|_2^2&\leq&
\left(\left\|p A^*A X+(1-p) XB^*B\right\|_2^2\,-r^2\left\|A^*AX-XB^*B\right\|_2^2\right)\\&&
\times\left(\left\|(1-p) (A^*A) X+p XB^*B\right\|_2^2\,-r^2\left\|A^*AX-XB^*B\right\|_2^2\right),
\end{eqnarray*}}
where  $p\in[0,1]$ and $r=\min\{p,1-p\}$.
\end{corollary}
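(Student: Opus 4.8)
The plan is to derive the estimate from the unitarily invariant norm inequality \eqref{4}, specialized to the Hilbert--Schmidt norm, and then to sharpen each of its two factors by means of the refined Young inequality \eqref{momo}. Since $\|\cdot\|_2$ is unitarily invariant, applying \eqref{4} with the exponents $m,n,s,t$ (where $m+n=s+t=1$) gives
\begin{align*}
\|AXB^*\|_2^2\leq\|(A^*A)^mX(B^*B)^s\|_2\,\|(A^*A)^nX(B^*B)^t\|_2,
\end{align*}
which reduces the problem to bounding the two factors on the right-hand side.

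The main idea is that, after rescaling the exponents, each factor becomes a Heinz-type mean to which \eqref{momo} applies directly. Setting $P=(A^*A)^{m/p}$, $Q=(B^*B)^{s/(1-p)}$, $R=(A^*A)^{n/(1-p)}$ and $S=(B^*B)^{t/p}$, all positive semidefinite, the power identities $(A^*A)^m=P^p$, $(B^*B)^s=Q^{1-p}$, $(A^*A)^n=R^{1-p}$ and $(B^*B)^t=S^p$ rewrite the first factor as $\|P^pXQ^{1-p}\|_2$ and the second as $\|R^{1-p}XS^p\|_2$. Applying \eqref{momo} with parameter $p$ to the first factor yields
\begin{align*}
\|P^pXQ^{1-p}\|_2^2\leq\|pPX+(1-p)XQ\|_2^2-r^2\|PX-XQ\|_2^2,
\end{align*}
which is exactly the first parenthesized quantity in the statement. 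For the second factor I would invoke \eqref{momo} with parameter $1-p$; because $\min\{1-p,p\}=\min\{p,1-p\}=r$, the same constant $r$ survives, and this produces the second parenthesized quantity. Feeding the two bounds back into the inequality from the first step then gives the claim.

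The step that demands the most care is the exponent bookkeeping together with the endpoints $p\in\{0,1\}$. The rescaled powers $m/p$, $s/(1-p)$, $n/(1-p)$, $t/p$ are only meaningful for $p\in(0,1)$, and since $m,n,s,t$ may be negative, the fractional powers of $A^*A$ and $B^*B$ need not exist unless these matrices are invertible; I would therefore first assume $A$ and $B$ positive definite (or replace $A^*A$, $B^*B$ by $A^*A+\varepsilon I$, $B^*B+\varepsilon I$), run the argument there, and then recover the general and endpoint cases by letting $\varepsilon\to0$ and using continuity. Verifying the identities $P^p=(A^*A)^m$, etc., and matching the single constant $r$ across both applications of \eqref{momo} are the only genuinely delicate points. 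One should also keep in mind that \eqref{momo} controls the \emph{square} of each factor whereas the first step involves the factors to their first powers, so reconciling these two scalings is the heart of the final accounting; once it is handled, combining the two factor bounds is purely formal.
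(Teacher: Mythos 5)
Your route is in substance the paper's own: the corollary is obtained there by setting $m=t=p$, $n=s=1-p$ in Theorem \ref{main3e}, and Theorem \ref{main3e} is itself proved by exactly your two ingredients, namely the interpolation \eqref{4} specialized to the Hilbert--Schmidt norm followed by the refined Young inequality \eqref{momo} applied to each factor. One remark on packaging: the detour through general $m,n,s,t$ and the rescaled powers $(A^*A)^{m/p}$, etc., is unnecessary here and is what creates the invertibility and endpoint worries you then patch with an $\varepsilon$-regularization. For this corollary you must in any case choose $m=t=p$, $n=s=1-p$ (otherwise your $P,Q,R,S$ are not $A^*A$ and $B^*B$ and the parenthesized quantities do not match the statement), and with that choice every exponent equals $1$, so you can start directly from \eqref{kiteq1} for $\|\cdot\|_2$ and apply \eqref{momo} with $A^*A$, $B^*B$ in place of $A$, $B$; no fractional powers and no limiting argument are needed.

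The serious issue is the ``final accounting'' you defer: it is not a formality, and it cannot be closed. Your first step gives $\|AXB^*\|_2^2\le F_1F_2$ with $F_1=\|(A^*A)^pX(B^*B)^{1-p}\|_2$ and $F_2=\|(A^*A)^{1-p}X(B^*B)^p\|_2$, while \eqref{momo} bounds $F_1^2$ and $F_2^2$ by the two parenthesized quantities. Multiplying those bounds yields $\|AXB^*\|_2^4\le(\cdots)(\cdots)$, not $\|AXB^*\|_2^2\le(\cdots)(\cdots)$; one cannot pass from a bound on $F_i^2$ to the same bound on $F_i$ without knowing $F_i\le1$. Indeed the inequality as printed is false because it is not homogeneous: replacing $A,B,X$ by $\varepsilon A,\varepsilon B,\varepsilon X$ scales the left side like $\varepsilon^6$ and the right side like $\varepsilon^{12}$, so for $A=B=X=\varepsilon I$ with $\varepsilon$ small the left side exceeds the right. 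What your argument (and the paper's) actually establishes is the version with $\|AXB^*\|_2^4$ on the left, equivalently a square root of the product on the right; the same slip appears in \eqref{kiteq2} and in the displayed chain proving Theorem \ref{main3e}. Your instinct to single out the squaring mismatch was exactly right; the resolution is to correct the exponent on the left-hand side, not to hope the two scalings reconcile.
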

\begin{proof}
For $p\in[0,1]$, if we put  $m=t=p$ and $n=s=1-p$ in Theorem \ref{main3e}, then we get the desired result.
\end{proof}
The next result is a refinement of inequality \eqref{kiteq2}.
\begin{theorem}\label{m7}
Let $A, B, X\in\mathcal{M}_n({\mathbb C})$ and let $p\in (0, 1)$. Then\\
\\
$(\rm i)$ for $0<p\leq\frac{1}{2}$,
{\footnotesize\begin{align}\label{m2}
&\|AXB^{*}\|_{2}^{2}\leq\nonumber\\&
\Big(\|pA^{*}AX+(1-p)XB^{*}B\|_{2}^{2}-r_{0}\|(A^{*}A)^{\frac{1}{2}}X(B^{*}B)^{\frac{1}{2}}-A^{*}AX\|_{2}^{2}
-(1-p)^{2}\|A^{*}AX-XB^{*}B\|_{2}^{2} \Big)^{\frac{1}{2}}\nonumber\\&
\times \Big(\|(1-p)A^{*}AX+pXB^{*}B\|_{2}^{2}-r_{0}\|(A^{*}A)^{\frac{1}{2}}X(B^{*}B)^{\frac{1}{2}}-A^{*}AX\|_{2}^{2}
-p^{2}\|A^{*}AX-XB^{*}B\|_{2}^{2} \Big)^{\frac{1}{2}}.
\end{align} }
$(\rm ii)$ for $\frac{1}{2}<p<1$,
{\footnotesize\begin{align}\label{m3}
&\|AXB^{*}\|_{2}^{2}\leq\nonumber\\&\Big(\|pA^{*}AX+(1-p)XB^{*}B\|_{2}^{2}-r_{0}\|(A^{*}A)^{\frac{1}{2}}X(B^{*}B)^{\frac{1}{2}}-X(B^{*}B)\|_{2}^{2}
-(1-p)^{2}\|A^{*}AX-XB^{*}B\|_{2}^{2} \Big)^{\frac{1}{2}}\nonumber\\&
\times \Big(\|(1-p)A^{*}AX+pXB^{*}B\|_{2}^{2}-r_{0}\|(A^{*}A)^{\frac{1}{2}}X(B^{*}B)^{\frac{1}{2}}-X(B^{*}B)\|_{2}^{2}
-p^{2}\|A^{*}AX-XB^{*}B\|_{2}^{2} \Big)^{\frac{1}{2}},
\end{align}}
where $r=\min\{p, 1-p\}$ and $r_{0}=\min\{2r, 1-2r\}$.
\end{theorem}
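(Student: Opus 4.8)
The plan is to follow the template of Theorem~\ref{main3e}, but to feed each of the two factors coming from the Hilbert--Schmidt Cauchy--Schwarz bound into the sharper Zhao--Wu refinement \eqref{mm} of the Young inequality (and its $\frac12<p<1$ companion displayed just after it) rather than into the weaker refinement \eqref{momo}. Concretely, specialize inequality \eqref{4} to the Hilbert--Schmidt norm with $m=t=p$ and $n=s=1-p$; since $m+n=s+t=1$ this is legitimate and yields
\begin{align*}
\|AXB^*\|_2^2\leq\big\|(A^*A)^{p}X(B^*B)^{1-p}\big\|_2\,\big\|(A^*A)^{1-p}X(B^*B)^{p}\big\|_2.
\end{align*}
Since $\|F\|_2=(\|F\|_2^2)^{1/2}$, it then suffices to bound the two squared norms separately by the two bracketed quantities in \eqref{m2} (for $0<p\le\frac12$) or \eqref{m3} (for $\frac12<p<1$); multiplying these bounds and taking a square root reproduces the product form in the statement.

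For a single factor the mechanism is the substitution $A\mapsto A^*A$, $B\mapsto B^*B$ in \eqref{mm}, under which $A^{1/2}XB^{1/2}\mapsto(A^*A)^{1/2}X(B^*B)^{1/2}$, $AX\mapsto A^*AX$ and $XB\mapsto XB^*B$; transposing the two defect terms of \eqref{mm} to the right then gives an upper bound for $\|(A^*A)^{p}X(B^*B)^{1-p}\|_2^2$. For $0<p\le\frac12$ this produces exactly the first bracket of \eqref{m2} (with $r=p$ and cross term built from $A^*AX$), and for $\frac12<p<1$ the companion inequality gives the first bracket of \eqref{m3} (cross term built from $XB^*B$). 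Thus the leading factor is immediate, and the case split is forced by the two regimes in \eqref{mm}.

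The step I expect to be the main obstacle is the second factor $\|(A^*A)^{1-p}X(B^*B)^{p}\|_2^2$, whose leading exponent $1-p$ sits on the \emph{opposite} side of $\frac12$ from $p$, so the same branch of \eqref{mm} cannot be applied verbatim. I would use the adjoint invariance of the Hilbert--Schmidt norm, rewriting $\|(A^*A)^{1-p}X(B^*B)^{p}\|_2=\|(B^*B)^{p}X^{*}(A^*A)^{1-p}\|_2$ so that the small exponent again sits on the left, applying the branch of \eqref{mm} appropriate to that exponent, and then transposing every resulting term back. The delicate bookkeeping here is to confirm that the main term returns $\|(1-p)A^*AX+pXB^*B\|_2$ and that the two defect terms collapse to the second bracketed expression in \eqref{m2} (respectively \eqref{m3}); in particular one must track carefully which of $A^*AX$ or $XB^*B$ enters the cross term and whether its companion coefficient is $p^2$ or $(1-p)^2$, according to the branch of \eqref{mm} that is invoked. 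Once this is checked, and once one notes that $r=\min\{p,1-p\}$ and $r_0=\min\{2r,1-2r\}$ are the same for both factors, the theorem follows by multiplying the two bounds and taking square roots as above.
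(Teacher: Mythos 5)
Your overall strategy coincides with the paper's: specialize \eqref{4} to the Hilbert--Schmidt norm with $m=t=p$, $n=s=1-p$, and bound each of the two resulting factors by the Zhao--Wu refinement \eqref{mm} after the substitution $A\mapsto A^*A$, $B\mapsto B^*B$. The first factor works exactly as you describe. The genuine gap is the second factor, which you rightly single out as the obstacle but then leave as ``delicate bookkeeping to confirm'': if you actually carry out that bookkeeping, it does \emph{not} confirm the stated second bracket. Writing $\|(A^*A)^{1-p}X(B^*B)^p\|_2=\|(B^*B)^pX^*(A^*A)^{1-p}\|_2$ and applying the $0<p\le\tfrac12$ branch of \eqref{mm} to the triple $(B^*B,\,A^*A,\,X^*)$ gives, after taking adjoints back inside every norm,
\begin{align*}
\|(A^*A)^{1-p}X(B^*B)^p\|_2^2&\le\|(1-p)A^*AX+pXB^*B\|_2^2-r_0\big\|(A^*A)^{\frac12}X(B^*B)^{\frac12}-XB^*B\big\|_2^2\\
&\quad-(1-p)^2\|A^*AX-XB^*B\|_2^2,
\end{align*}
i.e.\ the cross term involves $XB^*B$ (not $A^*AX$) and the coefficient of the last defect is $(1-p)^2$ (not $p^2$). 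One gets the identical expression by instead applying the $\tfrac12<q<1$ branch directly with $q=1-p$, so this is not an artifact of the adjoint trick. Since $\|(A^*A)^{1/2}X(B^*B)^{1/2}-A^*AX\|_2$ and $\|(A^*A)^{1/2}X(B^*B)^{1/2}-XB^*B\|_2$ are not comparable in general, the second bracket of \eqref{m2} cannot be recovered from this; your route therefore proves a corrected variant of the theorem rather than the theorem as stated.

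Your caution at this step is in fact sharper than the paper's own argument, and it exposes its weak point: the paper obtains \eqref{m5} by ``interchanging $p$ and $1-p$'' inside \eqref{m4}, but \eqref{m4} was derived only for $0<p\le\tfrac12$, so the substitution lands in the other regime of \eqref{mm} and is not justified --- the formally substituted inequality is exactly the one your careful computation fails to reproduce. (A related coefficient discrepancy, $(1-p)^2$ versus $p^2$, occurs in the first bracket of \eqref{m3}, though there it only weakens the bound.) So to complete your proposal you must either amend the second brackets of \eqref{m2} and \eqref{m3} to what \eqref{mm} actually yields, or supply an independent argument for the paper's version of \eqref{m5}; as written, the proposal does neither.
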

\begin{proof}
 The proof of inequality \eqref{m3} is similar to that of inequality \eqref{m2}. Thus, we only need to prove the inequality \eqref{m2}.\\
 If $0<p\leq\frac{1}{2}$, then we replace $A$ and $B$ by $A^{*}A$ and $B^{*}B$ in inequality \eqref{mm}, respectively, we have
 {\footnotesize\begin{align}\label{m4}
 &\|(A^{*}A)^{p}X(B^{*}B)^{1-p}\|_{2}\leq\nonumber\\&
 \Big(\|pA^{*}AX+(1-p)XB^{*}B\|_{2}^{2}-r_{0}\|(A^{*}A)^{\frac{1}{2}}X(B^{*}B)^{\frac{1}{2}}-A^{*}AX\|_{2}^{2}
-(1-p)^{2}\|A^{*}AX-XB^{*}B\|_{2}^{2} \Big)^{\frac{1}{2}}.
\end{align}}
Interchanging the roles of $p$ and $1-p$ in the inequality \eqref{m4}, we get
 {\footnotesize\begin{align}\label{m5}
 &\|(A^{*}A)^{1-p}X(B^{*}B)^{p}\|_{2}\leq\nonumber\\&
 \Big(\|(1-p)A^{*}AX+pXB^{*}B\|_{2}^{2}-r_{0}\|(A^{*}A)^{\frac{1}{2}}X(B^{*}B)^{\frac{1}{2}}-A^{*}AX\|_{2}^{2}
-p^{2}\|A^{*}AX-XB^{*}B\|_{2}^{2} \Big)^{\frac{1}{2}}.
\end{align}}
Applying  inequalities \eqref{4}, \eqref{m4} and \eqref{m5}, we get the desired result.
\end{proof}
\begin{corollary}
Let $A, B\in\mathcal{M}_n({\mathbb C})$ and $p\in (0, 1)$. Then\\
\\
$(\rm i)$ for $0<p\leq\frac{1}{2}$,
{\footnotesize\begin{align*}
&\|AB^{*}\|_{2}^{2}\leq\\&
\Big(\|pA^{*}A+(1-p)B^{*}B\|_{2}^{2}-r_{0}\|(A^{*}A)^{\frac{1}{2}}(B^{*}B)^{\frac{1}{2}}-A^{*}A\|_{2}^{2}
-(1-p)^{2}\|A^{*}A-B^{*}B\|_{2}^{2} \Big)^{\frac{1}{2}}\\&
\times \Big(\|(1-p)A^{*}A+pB^{*}B\|_{2}^{2}-r_{0}\|(A^{*}A)^{\frac{1}{2}}(B^{*}B)^{\frac{1}{2}}-A^{*}A\|_{2}^{2}
-p^{2}\|A^{*}A-B^{*}B\|_{2}^{2} \Big)^{\frac{1}{2}}.
\end{align*}}
$(\rm ii)$ for $\frac{1}{2}<p<1$,
{\footnotesize\begin{align*}
&\|AB^{*}\|_{2}^{2}\leq\\&
\Big(\|pA^{*}A+(1-p)B^{*}B\|_{2}^{2}-r_{0}\|(A^{*}A)^{\frac{1}{2}}(B^{*}B)^{\frac{1}{2}}-(B^{*}B)\|_{2}^{2}
-(1-p)^{2}\|A^{*}A-B^{*}B\|_{2}^{2} \Big)^{\frac{1}{2}}\\&
\times \Big(\|(1-p)A^{*}A+pB^{*}B\|_{2}^{2}-r_{0}\|(A^{*}A)^{\frac{1}{2}}(B^{*}B)^{\frac{1}{2}}-(B^{*}B)\|_{2}^{2}
-p^{2}\|A^{*}A-B^{*}B\|_{2}^{2} \Big)^{\frac{1}{2}},
\end{align*}}
where $r=\min\{p, 1-p\}$ and $r_{0}=\min\{2r, 1-2r\}$.
\end{corollary}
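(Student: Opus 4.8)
The plan is to obtain this corollary directly as the special case $X = I$ of Theorem \ref{m7}, where $I$ denotes the $n \times n$ identity matrix. Since Theorem \ref{m7} is established for every $X \in \mathcal{M}_n$, this substitution is legitimate and no new estimate is needed; the whole argument reduces to recording the resulting simplifications of the two inequalities \eqref{m2} and \eqref{m3}.

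First I would set $X = I$ in inequality \eqref{m2} to treat case (i). With $X = I$ the left-hand side $\|AXB^*\|_2$ becomes $\|AB^*\|_2$, while $A^*AX$ and $XB^*B$ collapse to $A^*A$ and $B^*B$ respectively; in particular the difference $A^*AX - XB^*B$ becomes $A^*A - B^*B$. The geometric-mean factor $(A^*A)^{\frac{1}{2}}X(B^*B)^{\frac{1}{2}}$ becomes $(A^*A)^{\frac{1}{2}}(B^*B)^{\frac{1}{2}}$, so the correction term $\|(A^*A)^{\frac{1}{2}}X(B^*B)^{\frac{1}{2}} - A^*AX\|_2$ becomes $\|(A^*A)^{\frac{1}{2}}(B^*B)^{\frac{1}{2}} - A^*A\|_2$. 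Feeding these into \eqref{m2} produces exactly the stated inequality for $0 < p \le \frac{1}{2}$. Repeating the identical substitution in \eqref{m3} gives case (ii): the only structural difference is that the correction term there is built from $X(B^*B)$ rather than $A^*AX$, so under $X = I$ it specializes to $\|(A^*A)^{\frac{1}{2}}(B^*B)^{\frac{1}{2}} - B^*B\|_2$, yielding the stated inequality for $\frac{1}{2} < p < 1$. The constants $r = \min\{p, 1-p\}$ and $r_0 = \min\{2r, 1-2r\}$ are untouched by the substitution.

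Because the proof is a pure specialization, there is no genuine obstacle; the single point requiring care is the bookkeeping that distinguishes the two cases, namely confirming that the two structurally different correction terms of parts (i) and (ii) of Theorem \ref{m7} collapse to the correct, distinct correction terms of the corollary, and that every factor of $X$ disappears cleanly rather than leaving a stray identity factor.
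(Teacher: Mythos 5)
Your proof is correct and matches the paper's (implicit) intent: the corollary is stated immediately after Theorem \ref{m7} with no separate proof, precisely because it is the specialization $X=I$, and your bookkeeping of how each term collapses — including the distinct correction terms in cases (i) and (ii) — is accurate.
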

We would like obtain upper bound for $|||AXB^{*}|||$, for every unitary invariant norm.\\
The following lemma has been shown in \cite{sab}, and considered as a refined matrix
Young's inequality for every unitary invariant norm.
\begin{lemma}
Let $A, B, X\in\mathcal{M}_n$ such that $A, B$ are positive semidefinite. Then for $0\leq p\leq1$, we have
\begin{align}\label{m8}
|||A^pXB^{1-p}|||^2+r_{0}(|||AX|||-|||XB|||)^2\leq(p|||AX|||+(1-p)|||XB|||)^2,
\end{align}
where $r_{0}=\min\{p,1-p\}$.
\end{lemma}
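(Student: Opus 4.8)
The plan is to fix the three scalars $a:=|||AX|||$, $b:=|||XB|||$ and $\gamma:=|||A^{p}XB^{1-p}|||$ and to note that, once these are known, inequality \eqref{m8} is an ordinary inequality among real numbers; the only properly matricial ingredient needed is a comparison of $\gamma$ with $a$ and $b$. First I would establish the H\"older--type bound
\begin{align*}
|||A^{p}XB^{1-p}|||\le|||AX|||^{p}\,|||XB|||^{1-p},
\end{align*}
that is $\gamma\le a^{p}b^{1-p}$. For positive definite $A,B$ this comes from the log-convexity of $\nu\mapsto|||A^{\nu}XB^{1-\nu}|||$ on $[0,1]$, obtained by applying the Hadamard three-lines theorem to the analytic family $z\mapsto A^{z}XB^{1-z}$ on the strip $0\le\operatorname{Re}z\le1$, using that $A^{it},B^{it}$ are unitary together with the unitary invariance of $|||\cdot|||$; the boundary values are $|||XB|||$ at $\nu=0$ and $|||AX|||$ at $\nu=1$, and $\nu=p$ gives the claim. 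The positive semidefinite case then follows by applying the estimate to $A+\varepsilon I$ and $B+\varepsilon I$ and letting $\varepsilon\to0^{+}$.

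With this in hand, since $\gamma^{2}\le a^{2p}b^{2(1-p)}$ dominates the geometric term, inequality \eqref{m8} is implied by the single scalar inequality
\begin{align*}
a^{2p}b^{2(1-p)}+r_{0}(a-b)^{2}\le\big(pa+(1-p)b\big)^{2}\qquad(a,b\ge0),
\end{align*}
which is exactly a squared refined Young inequality in the variables $a,b$. I would prove it by homogeneity: dividing by $b^{2}$ (the case $b=0$ being trivial) and writing $x=a/b\ge0$, it is equivalent to
\begin{align*}
\phi(x):=\big(px+1-p\big)^{2}-x^{2p}-r_{0}(x-1)^{2}\ge0\qquad(x\ge0).
\end{align*}
One checks immediately that $\phi(1)=0$ and $\phi'(1)=0$, so $x=1$ is the natural equality point, and the whole inequality then reduces to a one-variable convexity/monotonicity study of $\phi$ anchored at that point, controlled by the sign of $\phi''$.

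The step I expect to be the main obstacle is precisely this sign analysis with the exact constant $r_{0}=\min\{p,1-p\}$ from the statement, because that constant is where everything is decided. A direct computation gives $\phi''(1)=2\big(p(1-p)-r_{0}\big)$, so the argument above closes only if $p(1-p)\ge r_{0}$ at the equality point, and this condition is most binding for $0<p<\tfrac12$, where it must be examined with care; the endpoint $p=\tfrac12$ is the decisive test case in which $(a-b)^{2}$ is multiplied by its largest admissible factor. This is genuinely delicate: the standard squared refinements of Young's inequality of Kittaneh--Manasrah type deliver the companion condition $p(1-p)\ge r_{0}^{2}$ (with equality exactly at $p=\tfrac12$), so the heart of the proof is to upgrade that estimate to the stated coefficient $r_{0}$. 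I would therefore treat the verification of $\phi\ge0$ with the constant $r_{0}$ as the crux, testing any candidate argument first against the $p=\tfrac12$ boundary before invoking the reductions of the previous two paragraphs.
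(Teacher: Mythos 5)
Your reduction---freeze $a=|||AX|||$, $b=|||XB|||$, bound $|||A^{p}XB^{1-p}|||\le a^{p}b^{1-p}$ via the three-lines theorem with $\varepsilon$-regularization, then finish with a scalar refined Young inequality---is the right architecture, and for the record the paper offers no proof of this lemma at all: it simply cites \cite{sab}. But the obstacle you flagged at the end is not a delicate point requiring care; it is fatal to the statement as printed. Your own computation already shows this: $\phi''(1)=2\bigl(p(1-p)-r_{0}\bigr)$, and $p(1-p)<\min\{p,1-p\}=r_{0}$ for \emph{every} $p\in(0,1)$ (since $\max\{p,1-p\}<1$), so $\phi''(1)<0$ always, and together with $\phi(1)=\phi'(1)=0$ this forces $\phi<0$ on a punctured neighborhood of $x=1$. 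Since the scalar case embeds into the matrix statement (take $A=aI$, $B=bI$, $X=I$, so every term picks up the same factor $|||I|||$ or $|||X|||$), inequality \eqref{m8} is false for every $p\in(0,1)$, not merely suspect near $p=\tfrac12$. A cruder counterexample: $p=\tfrac12$, $B=0$ gives $r_{0}=\tfrac12$ and \eqref{m8} reads $\tfrac12|||AX|||^{2}\le\tfrac14|||AX|||^{2}$, false whenever $AX\ne0$. So the ``upgrade'' from $r_{0}^{2}$ to $r_{0}$ that you correctly identified as the crux is impossible; no completion of your plan (or any other) exists for the lemma as stated.

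The resolution is that the coefficient is a misprint for $r_{0}^{2}$, exactly the Kittaneh--Manasrah-type constant you anticipated: the paper itself applies the lemma in precisely that form, since the proposition that follows uses \eqref{m9} and \eqref{m10}, where the subtracted term is $r_{0}^{2}\bigl(|||A^{*}AX|||-|||XB^{*}B|||\bigr)^{2}$. With $r_{0}^{2}$ in place your two-step reduction closes completely and elementarily: the interpolation bound $\gamma\le a^{p}b^{1-p}$ is sound as you argued, and the remaining scalar inequality $(a^{p}b^{1-p})^{2}+r_{0}^{2}(a-b)^{2}\le\bigl(pa+(1-p)b\bigr)^{2}$ needs no convexity study of $\phi$ at all. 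For $0<p\le\tfrac12$ (so $r_{0}=p$) one has the factorization $\bigl(pa+(1-p)b\bigr)^{2}-p^{2}(a-b)^{2}=b\bigl(2pa+(1-2p)b\bigr)\ge b\,a^{2p}b^{1-2p}=a^{2p}b^{2(1-p)}$ by the weighted AM--GM with weights $2p$ and $1-2p$, and the case $\tfrac12<p<1$ follows by swapping $(a,p)\leftrightarrow(b,1-p)$; this is the same scalar refinement underlying \cite{hirz}. So keep your framework, correct the constant to $r_{0}^{2}$, and replace the $\phi$-analysis by this two-line identity.
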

\begin{proposition}
Let $A, B, X\in\mathcal{M}_n$. Then
\begin{align*}
|||AXB^*||||^{2}&\leq\Big((p|||A^*AX|||+(1-p)|||XB^*B|||)^2-r_{0}^{2}(|||A^*AX|||-|||XB^*B|||)^{2}\Big)^\frac{1}{2}\\&
\times\Big(((1-p)|||A^*AX|||+p|||XB^*B|||)^2-r_{0}^{2}(|||A^*AX|||-|||XB^*B|||)^{2}\Big)^\frac{1}{2},
\end{align*}
where $p\in [0,1]$ and $r_0=\min\{p,1-p\}$.
\end{proposition}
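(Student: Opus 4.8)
The plan is to feed the two factors produced by the interpolated Cauchy--Schwarz inequality \eqref{kiteq1} into the refined Young inequality \eqref{m8}. First I would invoke \eqref{kiteq1} (equivalently, inequality \eqref{4} of Theorem \ref{main2} with $m=t=p$ and $n=s=1-p$) to write
\[
|||AXB^*|||^2\leq|||(A^*A)^pX(B^*B)^{1-p}|||\;|||(A^*A)^{1-p}X(B^*B)^{p}|||.
\]
This isolates exactly the two quantities to which I want to apply the Young refinement, with the arithmetic means $|||A^*AX|||$ and $|||XB^*B|||$ of the proposition arising naturally once $A^*A$ and $B^*B$ play the roles of the positive semidefinite matrices.

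Next I would apply inequality \eqref{m8} twice, each time with $A^*A$ and $B^*B$ (which are positive semidefinite) substituted for $A$ and $B$. Taking exponent $p$ gives
\[
|||(A^*A)^pX(B^*B)^{1-p}|||^2\leq\bigl(p|||A^*AX|||+(1-p)|||XB^*B|||\bigr)^2-r_0\bigl(|||A^*AX|||-|||XB^*B|||\bigr)^2,
\]
and taking exponent $1-p$ (which interchanges the two arithmetic terms but leaves $\min\{p,1-p\}=r_0$ unchanged) gives
\[
|||(A^*A)^{1-p}X(B^*B)^{p}|||^2\leq\bigl((1-p)|||A^*AX|||+p|||XB^*B|||\bigr)^2-r_0\bigl(|||A^*AX|||-|||XB^*B|||\bigr)^2.
\]

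Finally, since the left-hand sides are nonnegative the two upper bounds are nonnegative, so I may take square roots and multiply them by monotonicity of the square root. Writing the right-hand side of \eqref{kiteq1} as the square root of the product of the two squared factors and substituting the two displays above yields the asserted bound; because $r_0=\min\{p,1-p\}\in[0,\tfrac12]$ we have $r_0\geq r_0^2$, so the stated weaker form with $-r_0^2$ in each factor follows at once. The only point that needs care is the bookkeeping of the refinement constant across the two exponents $p$ and $1-p$: one must check that $\min\{p,1-p\}=\min\{1-p,p\}$, so that the \emph{same} $r_0$ appears in both factors, after which the result is a direct substitution and multiplication. I expect this constant-matching, together with the elementary passage from $r_0$ to $r_0^2$, to be the only mild obstacle; no inequality beyond \eqref{kiteq1} and \eqref{m8} is required.
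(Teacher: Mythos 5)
Your proposal is correct and follows essentially the same route as the paper: inequality \eqref{4} (i.e.\ \eqref{kiteq1}) combined with two applications of the refined Young inequality \eqref{m8} for $A^*A$, $B^*B$ with exponents $p$ and $1-p$, then taking square roots and multiplying. In fact you are slightly more careful than the paper, which writes the bound from \eqref{m8} directly with $-r_0^2$ rather than $-r_0$; your explicit observation that $r_0\geq r_0^2$ for $r_0\in[0,\tfrac12]$ is exactly the step needed to justify that weakening.
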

\begin{proof}
In inequality \eqref{m8}, we put $A=A^*A$ and $B=B^*B$, we get
{\footnotesize\begin{align}\label{m9}
|||(A^*A)^pX(B^*B)^{1-p}|||\leq\Big((p|||A^*AX|||+(1-p)|||XB^*B|||)^2-r_0^2(|||A^*AX|||-|||XB^*B|||)^2\Big)^{\frac{1}{2}}.
\end{align}}
Interchanging $p$ with $1-p$ in inequality \eqref{m9}, we get
{\footnotesize\begin{align}\label{m10}
|||(A^*A)^{1-p}X(B^*B)^p|||\leq\Big(((1-p)|||A^*AX|||+p|||XB^*B|||)^2-r_0^2(|||A^*AX|||-|||XB^*B|||)^2\Big)^{\frac{1}{2}}.
\end{align}}
Now by inequalities \eqref{4}, \eqref{m9} and \eqref{m10} we get the desired inequality.
\end{proof}
\bibliographystyle{amsplain}

\end{document}